\DeclareMathOperator*{\esssup}{ess\,sup}
\let\hide\iffalse
\newtheorem{theorem}{Theorem}[section]
\newtheorem{corollary}[theorem]{Corollary}
\newtheorem{lemma}[theorem]{Lemma}
\newtheorem{remark}[theorem]{Remark}
\renewcommand{\S}{\mathbb{S}}
\newcommand{\be}{\begin{equation}}
\newcommand{\bm}{\begin{multline}}
\newcommand{\ee}{\end{equation}}
\newcommand{\Bes}{\begin{eqnarray*}}
	\newcommand{\Ees}{\end{eqnarray*}}
\newcommand{\Be}{\begin{equation}}
\newcommand{\Ee}{\end{equation}}
\def\B{\begin{equation}}
\def\E{\end{equation}}
\def\BN{\begin{eqnarray*}}
\def\EN{\end{eqnarray*}}
\numberwithin{equation}{section}
	\title[Cooling process and upper-bounds for the Boltzmann equation]{
 Quantitative pointwise estimates of the cooling process for inelastic Boltzmann equation}
	\author{Gayoung An}
	\address{Department of Mathematics, Pohang University of Science and Technology (POSTECH), South Korea}
	\email{agy19@postech.ac.kr}
	\author{Jin Woo Jang}
	\address{Department of Mathematics, Pohang University of Science and Technology (POSTECH), South Korea}
	\email{jangjw@postech.ac.kr}
 \author{Donghyun Lee}
	\address{Department of Mathematics, Pohang University of Science and Technology (POSTECH), South Korea}
	\email{donglee@postech.ac.kr}
\begin{document}
\begin{abstract}
In this paper, we study the homogeneous inelastic Boltzmann equation for hard spheres. We first prove that the solution $f(t,v)$ is bounded pointwise from above by \(C_{f_0}\langle t \rangle^3\) and establish that the cooling time is infinite (\( T_c = +\infty \)) under the condition \( f_0 \in L^1_2 \cap L^{\infty}_{s} \) for \( s > 2 \). Away from zero velocity, we further prove that $ f(t,v)\leq
    C_{f_0, |v|} \langle t \rangle $ 
         for \(v \neq 0\) at any time \( t > 0 \). This time-dependent pointwise upper bound is natural in the cooling process, as we expect the density near \( v = 0 \) to grow rapidly. 
 We also establish an upper bound that depends on the coefficient of normal restitution constant, $\alpha \in (0,1]$. This upper bound becomes constant when $\alpha = 1$, restoring the known upper bound for elastic collisions \cite{L1983}. Consequently, through these results, we obtain Maxwellian upper bounds on the solutions at each time. 
\end{abstract}

\maketitle
\tableofcontents

\section{Introduction}
The classical Boltzmann equation describes the behavior of a rarefied collisional gas, consisting of a very large number of identical
particles undergoing perfectly elastic binary collisions. The velocity distribution function $f(t,x,v)$ is the probable number density of particles at time $t$, at point $x$, having speed $v$. The Boltzmann equation is written as
\begin{align*}
    \partial_t f + v \cdot \nabla_x f = Q(f,f), \quad f(0,x,v)=f_0(x,v) 
\end{align*} for $x\in \Omega\subset \mathbb{R}^3$, $v \in \mathbb{R}^3$ and $t \geq0$.
On the left-hand side of the Boltzmann equation we see a transport operator which describes the translation of a particle distribution without the presence of other forces and collisions. The nonlinear quadratic term, $Q(f,f),$ on the right-hand side is added to account for the collisions between particles. When two particles, initially traveling at velocities \(v\) and \(v_*\), collide with each other, their velocities change due to the interaction. After the collision, the new velocities of the particles are denoted as \(v'\) and \(v_*'\), respectively. These post-collisional velocities \(v'\) and \(v_*'\) are determined by the specific dynamics of the collision process, which can depend on factors such as the masses of the particles and the nature of the interaction between them. We assume that the mass of each particle is identical and normalize it to be 1 throughout the paper. Under elastic collisions, according to the laws of conservation of momentum and energy, the following relationships hold:
\begin{equation}
    \label{conservation}
v + v_* = v' + v_*', \quad |v|^2 + |v_*|^2 = |v'|^2 + |v_*'|^2.
\end{equation}
From these conservation laws, we derive four constraints on the six variables \(v'\) and \(v_*'\), leaving two degrees of freedom. Specifically, the post-collisional velocities \(v'\) and \(v_*'\) can be expressed as:
\begin{equation}
    \label{n_elastic}
v' = v - ((v - v_*) \cdot n)n \quad \text{and} \quad v_*' = v_* + ((v - v_*) \cdot n)n,
\end{equation}
using the vector \(n \in \mathbb{S}_{+}^{2}\). Physically, if gas particles are represented as billiard balls, the vector \(n \in \mathbb{S}_{+}^{2}\), known as the impact direction, denotes the unit vector connecting the centers of the balls at the moment of collision, as illustrated in Figure \(\ref{defi_n_fig}\).

In this paper, we frequently use an alternative representation of the post-collisional velocities \(v'\) and \(v_*'\) involving another angular variable \(\sigma \in \mathbb{S}^{2}\). This representation is given by:
\[
v' = \frac{v + v_*}{2} + \frac{|v - v_*|}{2}\sigma \quad \text{and} \quad v_*' = \frac{v + v_*}{2} - \frac{|v - v_*|}{2}\sigma.
\]
In Figure \(\ref{fig_v'_elastic}\), we illustrate the possible locations of \(v'\) and \(v_*'\) for given \(v\) and \(v_*\). As shown in the figure, the polar angle of \(\sigma\) along the \(z\)-axis, corresponding to \(v - v_*\), varies from \(0\) to \(\pi\), while the polar angle of \(n\) ranges from \(0\) to \(\frac{\pi}{2}\), placing \(n\) in \(\mathbb{S}^{2}_{+}\). Interestingly, for fixed \(v\) and \(v_*\), \(v'\) and \(v_*'\) always lie on a single sphere, and the Jacobian determinant between \((v, v_*)\) and \((v', v_*')\) is always equal to 1.
\begin{figure}[t]
\centering
\includegraphics[width=4cm]{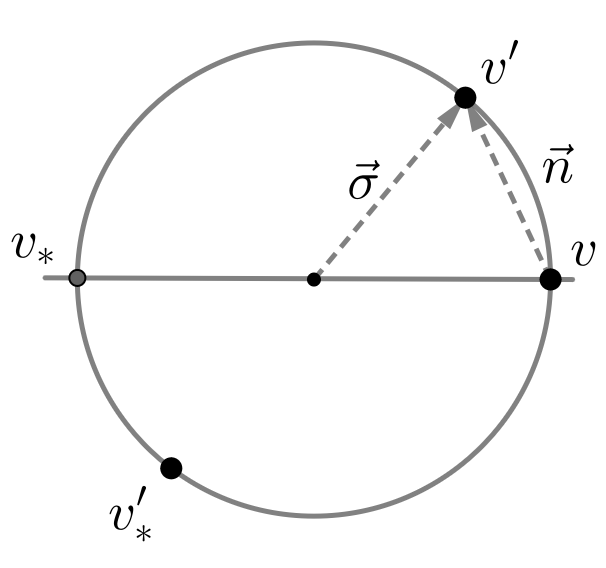} 
\caption{Given $v, v_*$, possible locations of $v'$, $v_*'$ in elastic collisions. \label{fig_v'_elastic}} 
\end{figure}
Next, we define the collision operator using the variables \(v\), \(v_*\), \(v'\), \(v_*'\), and \(n\) as described in \eqref{n_elastic}. It is defined as follows:
\begin{align} \label{classic BE}
\begin{split}
      Q(f,f)(t,v)
     &=\int_{\mathbb{R}^{3}} \int_{\mathbb{S}_{+}^{2}} B(|v-v_*|,\cos \theta) \left(f(t,v')f(t,v'_*)-f(t,v)f(t,v_*)\right) \; dn dv_*,  \\
     &:= Q^{+}(f,f)(t,v)-f(t,v) L f(t,v),
\end{split}
\end{align} where
\begin{align*}
     \cos \theta = \left( \frac{v_*-v}{|v_*-v|}, n \right), 
     \quad \theta \in \left[0,\frac{\pi}{2} \right].
\end{align*} Here, the operators are local in \(t\) and \(x\); we abuse notation by writing \(f(t,x,v) =f(t,v)\). The gain term, \(Q^{+}(f,f)(t,v)\), represents an increase in the density of particles with velocity \(v\) due to collisions involving velocities \(v'\) and \(v_*'\). Because the equations in \eqref{conservation} are time-reversible, if particles with velocities \(v\) and \(v_*\) collide to become \(v'\) and \(v_*'\), then particles with velocities \(v'\) and \(v_*'\) can collide to revert to \(v\) and \(v_*\).
The loss term, \(f(t,v)Lf(t,v)\), indicates a decrease in the density of particles with velocity \(v\) due to collisions with particles of velocity \(v_*\), resulting in their disappearance.

Now, we explain the collision kernel. Depending on the assumptions regarding the interaction potentials (e.g., hard-sphere potential or inverse-power-law potential), the collision kernel can be expressed as:
\[
B(|v - v_*|, \cos \theta) = b(\cos \theta) |v - v_*|^{\gamma}
\]
for some constant \(\gamma\). For instance, when particles collide like billiard balls (hard-sphere potential), we write \(B(|v - v_*|, \cos \theta) = |(v - v_*) \cdot n|\).
If \(\int_{\mathbb{S}_{+}^{2}} b(\cos \theta) \, dn < \infty\) or if \(b(\cos \theta) \in L^\infty(\mathbb{S}_{+}^{2})\), the collision kernel is called a cut-off kernel (or is with angular cutoff), following Grad's angular cutoff criterion in \cite{G1963}. Otherwise, it is referred to as a non-cutoff kernel (or is without angular cutoff). In the case of a non-cutoff collision kernel, there is a nonintegrable singularity around zero for \(\theta\). (Some authors place the singularity near \(\pi/2 - \theta \approx 0\).) Recently, the vanishing angular singularity limit from the inverse-power-law potential to the hard-sphere potential has been studied in \cite{MR4565088}.

By the famous Boltzmann H-theorem, we formally see that the total entropy of the system $-\iint f\log f dxdv$ will monotonically increase in time. Then we expect that the distribution will converge (in some sense) to an entropy maximizing state. One of the candidates for the states of maximum entropy is so-called the (local) Maxwellian distribution which is given by \begin{align*}
     \mathcal{M}(t,x,v) = \rho(t,x) \frac{e^{-\frac{|v-u(t,x)|^2}{2T(t,x)}}}{(2\pi T(t,x))^{3/2}},
 \end{align*}
 where the Boltzmann constant $k_B$ is assumed to be 1. Here, $\rho,$ $u$, and $T$ correspond to the physical terms such as mass density, macroscopic fluid velocity, and local temperature, respectively. A simple calculation can show that $Q(\mathcal{M}, \mathcal{M})(t,x,v)=~0$. If $\rho$, $u$, and $T$ are chosen so that the left-hand side of the Boltzmann equation satisfies $(\partial_t + v \cdot \nabla_x)\mathcal{M}=0$, $(\rho,u,T)$ solves some macroscopic conservation laws. In this case, the local Maxwellian $\mathcal{M}$ can be written in some specific forms depending on the domain and boundary conditions. We refer to \cite{MR1031086} for specific forms. In the paper, Desvillettes actually
proved that the renormalized solution of the Boltzmann equation converges weakly in $L^1$ to a local time-dependent Maxwellian state in the large-time asymptotics within a bounded domain. It was shown that this local Maxwellian further satisfies $(\partial_t + v \cdot \nabla_x)\mathcal{M}=0$. 
Additionally, before this result, Arkeryd proved in \cite{arkeryd1988long} the convergence of a renormalized solution to the global Maxwellian. The local stability of the global Maxwellian has now been very well-known. See \cite{MR2214953, AMUXY_1, AMUXY_2, Strain} for the proof of local stability nearby equilibrium for both cutoff and noncutoff cases. The existence of global solutions was firstly proved by Carleman \cite{MR1555365} in 1933 for the spatially homogeneous case. Then for the spatially inhomogeneous case, the global existence was proved  in the perturbative framework \cite{MR0363332} and near vacuum \cite{MR0760333}. Then the global existence of renormalized solutions to the Boltzmann equation by DiPerna-Lions \cite{MR1014927} in 1989 which does not require size restriction on the initial data. This framework has been extended to the noncutoff case by Alexandre and Villani in \cite{MR1857879}. The nonlinear energy method was developed to prove the global existence to the global wellposedness of the Vlasov(-Poisson or -Maxwell)-Boltzmann system in \cite{Guo_VPB,Guo_VMB, MR2259206}. See also \cite{MR2043729} for the energy method for the Boltzmann equation. 
In \cite{Guo_VPL,MR2209761}, the local stability of the Maxwellian for the Landau equation was proved based on the construction of the classical solution via the nonlinear energy method developed in \cite{MR1946444}.  Regarding the global existence and stability in a bounded domain with general physical boundary conditions (e.g., inflow, bounce-back, specular, and diffuse reflection), see \cite{Guo_decay,CD2018, MR3562318}.  
 \begin{figure}[t]
\centering
\includegraphics[width=4cm]{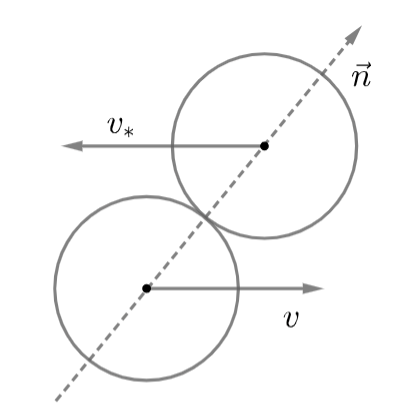} 
\caption{Impact direction, $\vec{n}$. \label{defi_n_fig}} 
\end{figure}

In contrast to an elastic collision, an inelastic collision is a collision in which kinetic energy is not conserved, and this type of gas is referred to as granular gas. Physically, in each inelastic collision, there is some loss of momentum in the impact direction, though the total momentum remains conserved. Let \(0 < \alpha \leq 1\) be the coefficient of normal restitution. For the impact direction \(n \in \mathbb{S}_{+}^2\), the following equations hold:
\begin{align} \label{alpha}
\begin{split}
    (v' - v_*') \cdot n &= -\alpha (v - v_*) \cdot n, \\
    (v' - v_*') - ((v' - v_*') \cdot n)n &= (v - v_*) - ((v - v_*) \cdot n)n.
\end{split}
\end{align}
If \(\alpha = 1\), the collision is elastic, satisfying \eqref{conservation}. If \(\alpha = 0\), the collision is sticky, meaning the particles travel together after the collision. Generally, the coefficient \(\alpha\) depends on \(|v - v_*|\) and \(\theta\), the angle between \(v - v_*\) and \(n\). However, in many cases, this dependence is not considered significant, and a constant restitution coefficient is often used.
Further details about inelastic collisions and the Boltzmann equation will be covered in Section \ref{sec.prelim}, Preliminaries. 

Many physicists and mathematicians are interested in the behavior of solutions to both the elastic and inelastic Boltzmann equations. Numerous numerical, physical, and mathematical studies have been conducted on this topic. In mathematics, the \(L^{\infty}\) norm, weighted \(L^{\infty}\) norm, \(L^{1}\) norm, and weighted \(L^{1}\) norm of the Boltzmann equation were among the first to be studied.

The \(L^{\infty}\) norm is particularly important because it allows us to consider the pointwise bounded behavior of the solution, which is much stronger than merely being energy-bounded (i.e., \(\int f|v|^2 \, dv < +\infty\)). Based on \(L^{\infty}\) estimates, the upper Maxwellian bounds were established in \cite{GPV2009}, which are used in the local existence theory of strong solutions as seen in \cite{G1958, KS1978}. Additionally, the Hölder continuity modulus and the lower Maxwellian bounds are dependent on the \(L^{\infty}\) norm of the solution, as discussed in \cite{IMS2020_1, IS2019}. This lower bound is crucial for demonstrating the convergence of solutions to Maxwellian equilibrium, as shown in \cite{DV2005}.
One of the most significant properties of inelastic collisions is that they are not time-reversible, making it impossible to determine the direction of entropy with respect to time. In the case of elastic collisions, the maximum entropy principle and Boltzmann’s \(H\)-theorem ensure that the solution converges to the Maxwellian distribution. However, for inelastic collisions, the absence of the \(H\)-theorem makes it more challenging to establish an equilibrium state.

Moreover, the behavior of solutions varies depending on the additional physical forces, such as a heat bath and friction, that are incorporated into the kinetic model. Without external forces, the kinetic energy decreases until all gas particles stop, a state referred to as the \textit{cooling state}. Details about these models are introduced in \cite{V2006} by Villani. Notably, in \cite{SCM2006}, Mischler, Mouhot, and Ricard studied the \textit{cooling process} and proved that the solutions converge to a Dirac mass in the \mbox{weak-*} measure sense asymptotically over a long time.

If kinetic energy is lost but the total density is conserved, it is predicted that the number of particles with low velocities will increase over time, while the number of particles with high velocities will decrease. Moreover, as velocity approaches zero, the density function increases rapidly over time (see Figure \ref{graph}). This motivates our study of the behavior of solutions over time for inelastic collisions without any external forces in this paper.

In this paper, we study the \(L^{\infty}\) norm of the solution of the homogeneous inelastic Boltzmann equation for hard spheres. We will then extend the \(L^{\infty}\) norm to establish upper Maxwellian bounds. During the cooling process, as the solution tends towards \(\delta_{v=0}\) and the kinetic energy approaches zero over time, it becomes crucial to derive pointwise upper bounds for the solution.
\begin{figure}[t]
\centering
\includegraphics[width=6cm]{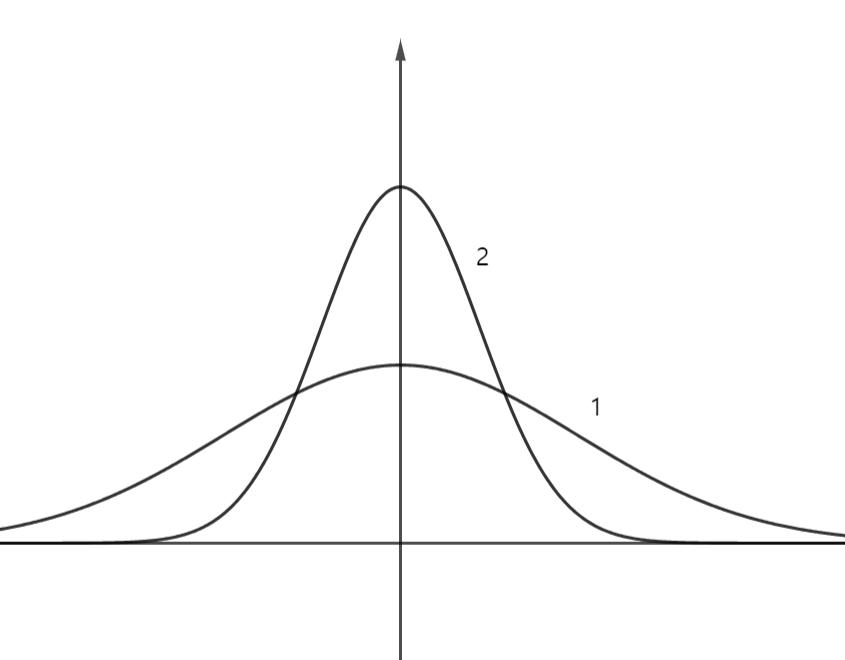} 
\caption{Velocity distribution for inelastic collisions over time (1 $\rightarrow$ 2).\label{graph}} 
\end{figure}

Before describing the inelastic model, we will briefly review the literature on the behavior of solutions to both the Boltzmann and inelastic Boltzmann equations.

\subsection{Notations}
Here, we briefly introduce some functions spaces, norms, and our notations. We define the weighted norms
\begin{align*}
    \|f\|_{p,s} &= \|f(t,\cdot)\|_{p,s}=\left(\int_{\mathbb{R}^3} \left(f(t,v)(1+|v|^2)^{\frac{s}{2}}\right)^p \; dv \right)^\frac{1}{p} < \infty
\end{align*} for $0 \leq s$ and $1 \leq p \leq \infty$. The corresponding weighted spaces are defined as 
\begin{align*}
    L_{s}^{p} = L_{s}^{p}(\mathbb{R}^3) = \left\{f :\text{ $f$ is measurable on} \; \mathbb{R}^3, \; \|f\|_{p,s}<\infty \right\}.
\end{align*} If $p=\infty, s=0$, we have 
\begin{align*} 
       \|f\|_{\infty,0} = \|f\|_{\infty}= \esssup_{v\in \mathbb{R}^3} |f(x)|.
\end{align*}  The indicator function of a subset $A$ within a set $X$ is a function $\chi_{A} : X \rightarrow \{0,1\}$, defined as
	\begin{align*} 
	\chi_{A}(x) = \begin{cases}
	1,\quad x \in A, \\ \;0,\;\quad x \notin A
	\end{cases} 
	\end{align*} for $x \in X$. Throughout the paper, we denote as $A\lesssim B$ to mean that there exists a uniform constant $C$ such that $A\le CB$. If the constants depend on $K$, we write $C(K)$.

\subsection{Known results for the elastic collisions}
In particular, if the velocity distribution function \(f\) does not depend on the spatial variable \(x\), the Boltzmann equation is called the (spatially) homogeneous Boltzmann equation. Regarding the homogeneous Boltzmann equation with a cut-off collision kernel, Carleman \cite{T1957} studied \(L^{\infty}\) estimates for hard spheres. In 1982, Arkeryd extended this result to the general case in \cite{L1983} and proved that
\[
(1+|v|)^{s'}f(t,v) \in L^{\infty}(\mathbb{R}_v^3)
\]
for \(t>0\) under the assumption that the initial distribution \((1+|v|)^{s}f_0 \in L^{\infty}(\mathbb{R}_v^3)\) for \(s>2\). For \(s \leq 5\), this holds for any \(s' \leq s\). For \(s > 5\), it holds that \(s' < s\) under the condition \((1+|v|^2)^{s_1/2}f \in L^{1}(\mathbb{R}^3_v)\) for some \(s_1 > 2\). In 1997, Bobylev proved that there exists \(\theta_*\) such that \(0 < \theta_* \leq \theta\) and
\[
e^{\theta_*|v|^2}f \in L^1(\mathbb{R}_v^3)
\]
for \(t > 0\) whenever \(e^{\theta|v|^2}f_0 \in L^1(\mathbb{R}_v^3)\) for hard spheres in \cite{B1997}. In \cite{P1997}, Pulvirenti and Wennberg demonstrated that the solution is pointwise bounded from below by a Maxwellian for hard potentials. In 2009, Gamba, Panferov, and Villani proved that if \(f_0\) has a Maxwellian upper bound, then the solutions have a uniform Maxwellian upper bound for hard potentials in \cite{GPV2009}. They applied the comparison principle using a dissipative property of the linear Boltzmann equation. Furthermore, the propagation of upper Maxwellian bounds in the spatially inhomogeneous case, specifically within the unit hypercube with periodic boundary conditions, was discussed.

Regarding the spatially inhomogeneous case with a non-cutoff collision kernel, Silvestre \cite[Theorem 1.2]{S2016} proved that
\[
\|f(t,\cdot,\cdot)\|_{L^{\infty}_{x,v}} \leq C(t) < \infty,
\]
where \(C(t) < \infty\) for all \(t > 0\) and depends only on the bounds of local macroscopic mass, energy, and entropy densities. In \cite[Theorem 1.3-(1)]{IMS2020}, Imbert, Mouhot, and Silvestre obtained
\[
\sup_{x,v} f(1+|v|)^{q} < \infty
\]
for \(t > 0\) if \(\sup_{x,v} f_0 (1+|v|)^{q} < \infty\) for \(q \geq 0\), applicable to hard and moderately soft potentials. In \cite{IMS2020_1}, the authors demonstrated the appearance of a Gaussian lower bound. Briant proved the emergence of an exponential lower bound with the physical Maxwellian diffuse and specular reflection boundary conditions in \cite{B2015, B2015_2}. In \cite{F2021}, Fournier established the creation and propagation of exponential moments for the homogeneous Boltzmann equation for hard potentials. However, the propagation and creation of a Maxwellian upper bound remain open in the non-cutoff case.

\subsection{Known results for the inelastic collisions}
When gas particles collide with each other and lose energy, it is termed an inelastic collision, and such a gas is referred to as a granular gas. The behavior of a granular gas differs significantly from that of an ideal gas undergoing elastic collisions. The basic concepts of kinetic theory and exciting phenomena in granular gas dynamics were introduced in \cite{MR2101911}. A hydrodynamic description for near elastic particles was explained in \cite{MR1749231}. Here, we review various studies on the homogeneous inelastic Boltzmann equation for hard spheres.

In 2004, Bobylev, Gamba, and Panferov \cite{BGP2004} obtained the steady velocity distributions of the solutions for large \(|v|\) with external forcing. In \cite{GPV2004}, Gamba, Panferov, and Villani studied the existence, smoothness, uniqueness, and lower bound estimates of the solutions, incorporating an additional diffusion term in the equation. In \cite{SCM2006}, Mischler, Mouhot, and Ricard developed the Cauchy theory and demonstrated the creation and propagation of exponential moments. Mischler and Mouhot further proved the existence, uniqueness, and time asymptotic convergence of the self-similar solution in \cite{SC2006, SC2009}. They also proved Haff's law, which explains the precise rate of decay to zero for the granular temperature. Alonso and Lods also investigated a system of viscoelastic particles in \cite{MR3238524}. A cooling inelastic Maxwell molecules of a suitable metric in the set of probability measures was studied by Bisi, Carrillo and Toscani in \cite{MR2264621}. The Cauchy problem for small data in the space of functions bounded by Maxwellians was studied in \cite{R2009}. 

Recently, there have been further developments regarding the homogeneous inelastic Boltzmann equation with a non-cutoff collision kernel. An and Lee \cite{AL2023} proved the non-Maxwellian lower bound of the solutions. Qi \cite{MR4373214} studied the well-posedness theory of measure-valued solutions. Additionally, Jang and Qi \cite{JQ2023} established the global-in-time existence of measure-valued solutions and demonstrated the creation of polynomial moments.

The collapse phenomenon of inelastic particles has been discussed in \cite{DE1999, BCP1997}. Recently, \cite{DV2024, DV2024_1} studied a dynamical system describing this collapse. Additionally, the inelastic collision operator appears in more generalized physical settings, such as the scattering of photons interacting with both the ground-state gas and the excited-state gas in the radiative transfer process (see \cite{MR4382578, MR4618201}). Furthermore, \cite{MR3231051} examined the Cauchy problem for the inelastic Vlasov–Poisson–Boltzmann system with a soft potential in the near vacuum regime.

\subsection{Preliminaries} \label{sec.prelim}
\subsubsection{Aftermath of inelastic collisions}In this section, we explain inelastic collisions. Let \( v \) and \( v_* \) be the velocities of two particles before a collision, and let \( v' \) and \( v_*' \) be their velocities after the collision. Recall equation \eqref{alpha}:
\begin{align} \label{def_alpha_2}
\begin{split}
    (v' - v_*') \cdot n &= -\alpha (v - v_*) \cdot n,  \\
    (v' - v'_*) - ((v' - v_*') \cdot n) n &= (v - v_*) - ((v - v_*) \cdot n) n,
\end{split}
\end{align}
where \( \alpha \) is the coefficient of normal restitution, \( 0 < \alpha \leq 1 \), and the impact direction \( n \in \mathbb{S}^2_{+} \). We define
\begin{align}\label{def_beta}
\beta = \frac{1 + \alpha}{2}, \quad \frac{1}{2} < \beta \leq 1.
\end{align}
Since momentum is conserved, we set
\[
v' = v - a(v, v_*, n) n, \quad v_*' = v + a(v, v_*, n) n
\]
for some function \( a(v, v_*, n) \) and \( n \in \mathbb{S}^2_{+} \). We find \( a(v, v_*, n) = \beta ((v - v_*) \cdot n) n \), so that \( v' \) and \( v_*' \) satisfy equation \eqref{def_alpha_2}. The post-collisional velocities \( v' \) and \( v_*' \) are expressed as:
\begin{align} \label{v'}
       v' = v - \beta ((v - v_*) \cdot n) n, \quad v_*' = v_* + \beta ((v - v_*) \cdot n) n.
\end{align}
In Figure \ref{fig_v'}, we illustrate these coordinates. When \( v \) and \( v_* \) are fixed, there are two spheres: the possible locations of \( v' \) are restricted to the left sphere, while those of \( v_*' \) are in the right sphere. Another way to express \( v' \) and \( v_*' \) is by using \( \sigma \in \mathbb{S}^2 \), starting with \( \frac{v + v_*}{2} \) such that:
\begin{align}\label{v'_sigma}
\begin{split}
     v' &= \frac{v + v_*}{2} + \frac{1 - \beta}{2}(v - v_*) + \frac{\beta}{2} |v - v_*| \sigma,  \\
     v_*' &= \frac{v + v_*}{2} - \frac{1 - \beta}{2}(v - v_*) - \frac{\beta}{2} |v - v_*| \sigma.
\end{split}
\end{align}
\begin{figure}[t]
\centering
\includegraphics[width=6cm]{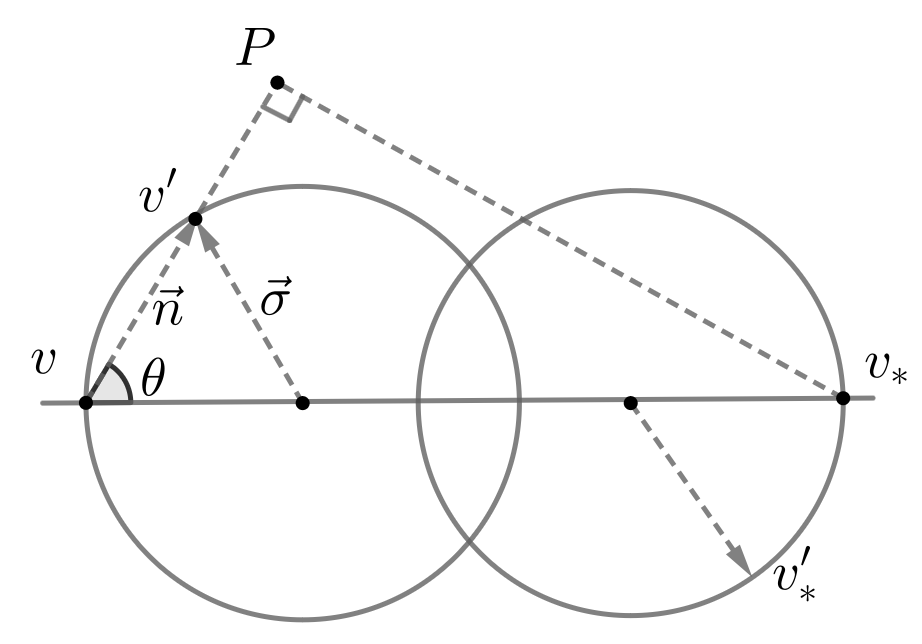} 
\caption{Possible locations of \( v' \) and \( v_*' \) given \( v \) and \( v_* \) in inelastic collisions. \label{fig_v'}} 
\end{figure}
In elastic collisions, we only need to define the post-collisional velocities \( v' \) and \( v_*' \). However, inelastic collisions are not time-reversible. If \( v' \) and \( v_*' \) collide, they do not revert to \( v \) and \( v_* \). Thus, in inelastic collisions, we define the pre-collisional velocities \( {'v} \) and \( {'v}_* \), which then become \( v \) and \( v_* \), respectively, after the collision. In equation \eqref{def_alpha_2}, we substitute \( v \) and \( v_* \) with \( v' \) and \( v_*' \), and \( {'v} \) and \( {'v}_* \) with \( v \) and \( v_* \) as follows:
\begin{align}
\begin{split} \label{'v}
    ({{'v}}-{{'v}}_*) \cdot n  &= -\frac{1}{\alpha} ( v-v_* ) \cdot n ,  \\
({'v}-{'v}_*)-(( {'v}-{'v}_*)\cdot n) n  &= (v-v_*)-((v-v_*) \cdot {n}) n.
\end{split}
\end{align} In contrast to \eqref{def_alpha_2}, $\alpha$ is replaced by $1/ \alpha$ in \eqref{'v}. Let $\gamma$ be
\begin{align} \label{gamma_defi}
    \gamma = \frac{1}{2} \left(1+\frac{1}{\alpha}\right), \quad 1 \leq \gamma < \infty.
\end{align}If we replace $\alpha$ with $1/ \alpha$ in \eqref{v'} and \eqref{v'_sigma}, then ${'v}, {'v}_*$ are expressed as
\begin{align} \label{'v,'v_*,n}
       {'v}= v-\gamma (( v-v_*) \cdot n ) {n}, \quad {'v}_*= v_*+\gamma (( v-v_*) \cdot n ) {n}
\end{align} for ${n}\in \S^{2}_{+}$ and 
\begin{align} \label{'v_sigma}
\begin{split}
     {'v}&=\frac{v+v_*}{2}+\frac{1-\gamma}{2}(v-v_*)+\frac{\gamma}{2}\;|v-v_*|{\sigma},  \\
     {'v}_*&=\frac{v+v_*}{2}-\frac{1-\gamma}{2}(v-v_*)-\frac{\gamma}{2}\;|v-v_*|{\sigma}
\end{split}
\end{align} for $\sigma \in \mathbb{S}^2$. The Jacobian between $(v,v_*)$ and $(v',v_*')$ is calculated as
\begin{align} \label{jacobian_alpha}
    \frac{\partial(v',v_*',-n)}{\partial(v,v_*,n) }=  \frac{\partial(v,v_*,-n)}{\partial({'v},{'v}_*,n) }=\alpha
\end{align} from \eqref{v'} and \eqref{'v,'v_*,n}. From \eqref{v'} and \eqref{v'_sigma}, we obtain directly
\begin{align} \label{loss_energy}
\begin{split}
      &|v'|^2+|v_*'|^2-|v|^2-|v_*|^2 \\
      &=- \frac{1-\alpha^2}{2}(( v-v_*) \cdot n )^2  \\
    &= - \frac{1-\alpha^2}{2}\frac{1-\frac{v-v_*}{|v-v_*|}\cdot \sigma }{2}|v-v_*|^2 \leq 0,
\end{split}
\end{align} and this shows that the loss of kinetic energy depends on the coefficient of normal restitution $\alpha$, $|v-v_*|$, and the collision angle. 

\subsubsection{Inelastic Boltzmann collision operator}
Now we introduce the spatially homogeneous inelastic Boltzmann equation. It goes by
\begin{align} \label{homo_IB}
    \partial_t f  = Q(f,f), \quad f(0,v)=f_0(v),
\end{align} 
where $v \in \mathbb{R}^3, \; t \geq0$ and $f\geq 0$. (The operator is local in $t$ and we abbreviate $t$ for notational convenience.) Taking $\phi(v)$ to be a suitably regular test function, the weak form of the inelastic Boltzmann collision operator $Q(f,g)(t,v)$ is written by 
\begin{align} 
     &\int_{\mathbb{R}^3} Q(f,g)(t,v) \phi(v) \; dv \notag \\
     &= \int_{\mathbb{R}^3}\int_{\mathbb{R}^3}\int_{\mathbb{S}_{+}^2} B(|v-v_{*}|, \cos \theta) f(t,v_*)g(t,v)(\phi(v')-\phi(v))\: dn  dv_*dv,\label{weak_colision_operator}
\end{align} where
\begin{align} \label{def_theta}
     \cos \theta = \left( \frac{v_*-v}{|v_*-v|}, n \right), 
     \quad \theta \in \left[0,\frac{\pi}{2} \right].
\end{align} 
In the case $f=g$ in \eqref{weak_colision_operator}, we have
\begin{align} \label{weak_4}
\begin{split}
     \int_{\mathbb{R}^3} Q(f,f)(t,v) \phi(v) \; dv 
     &= \frac{1}{2}\int_{\mathbb{R}^3}\int_{\mathbb{R}^3}\int_{\mathbb{S}_{+}^2}  B(|v-v_{*}|, \cos \theta) f(t,v_*)f(t,v)\\
     &\quad\quad\quad \times(\phi(v')+\phi(v_*')-\phi(v)-\phi(v_*))\: dn dv_*dv.
\end{split}
\end{align}
Taking $\phi(v)=1,|v|^2,$ and $ v_i$ into above equation, for any $t>0$, we have
\begin{align} \label{conser f0}
    \int_{\mathbb{R}^3} f(t,v) \;dv = \int_{\mathbb{R}^3} f_0(v) \;dv, 
    \text{\quad and  \quad}  \int_{\mathbb{R}^3} f(t,v)|v|^2 \;dv \leq \int_{\mathbb{R}^3} f_0(v)|v|^2 \;dv 
\end{align} by \eqref{loss_energy}, and
\begin{align} \label{conser f0 momentum}
    \int_{\mathbb{R}^3} f(t,v)(v \cdot e_i) \; dv = \int_{\mathbb{R}^3} f_0(v)(v \cdot e_i) \;dv 
\end{align} for $i=1,2,3$. Here $e_1=(1,0,0),\;e_2=(0,1,0)$ and $e_3=(0,0,1)$. In this paper, we suppose
\begin{align} \label{initial contidion}
    \int_{\mathbb{R}^3} f_0(v) \; dv = 1,
    \quad \int_{\mathbb{R}^3} f_0(v) (v \cdot e_i) \; dv = 0, 
    \quad \int_{\mathbb{R}^3} f_0(v)|v|^2 \; dv = 1
\end{align} for $i=1,2,3$, without loss of generality. See also \cite{GPV2004, SC2006, SC2009, SCM2006} in which the authors consider the same initial conditions \eqref{initial contidion}. By \eqref{conser f0}-\eqref{initial contidion}, it holds that 
\begin{align} \label{initial_bound}
\begin{split}
    &\|f(t,v)\|_{1,0}=\|f_0(v)\|_{1,0}=1, \quad\|f(t,v)\|_{1,2} \leq\|f_0(v)\|_{1,2} < +\infty\\
    &\quad\quad\quad \int_{\mathbb{R}^3} f_0(v) (v \cdot e_i) \; dv=\int_{\mathbb{R}^3} f(t,v) (v \cdot e_i) \; dv = 0 
\end{split}
\end{align} for $t>0$ and $f_0 \in L^1_2(\mathbb{R}^3)$. Moreover, 
the initial entropy is denoted as
\begin{align} \label{def_H0}
    \int_{\mathbb{R}^3} f_0(v) \log f_0(v) \; dv = H_0.
\end{align}

Throughout the paper, we will consider a hard sphere model 
\begin{align*}
  B(|v-v_{*}|, \cos \theta) = |(v-v_*) \cdot n |=|v-v_*| |\cos \theta|
\end{align*} for $\theta \in [0,\frac{\pi}{2}]$. Since $\delta(v)$ is approximated by $C^\infty_c$ test function, we can take $\phi(v)=\delta(v-v_1)$ in \eqref{weak_colision_operator}. By integrating about $v$, we have
\begin{align*}
       Q(f,g)(t,v_1) &= \int_{\mathbb{R}^3}\int_{\mathbb{R}^3}\int_{\mathbb{S}_{+}^2} |(v-v_*)\cdot n| f(t,v_*)g(t,v)(\delta(v'-v_1)-\delta(v-v_1)) \: dn  dv_*dv \\
       &:= Q^{+}(f,g)(t,v_1)-g(t,v_1) L f(t,v_1).
\end{align*} By changing variables $(v,v_*,n) \rightarrow (v',v_*',-n)$ with Jacobian determinant $\alpha$, and \eqref{alpha}, 
 \begin{align} \label{def_Q+}
 \begin{split}
     Q^{+}(f,g)(t,v_1) &= \int_{\mathbb{R}^3}\int_{\mathbb{R}^3}\int_{ \mathbb{S}_{+}^2}  |(v-v_*)\cdot n| f(t,v_*)g(t,v)\delta(v'-v_1)\: dn  dv_*dv  \\
     &= \frac{1}{\alpha^2}  \int_{\mathbb{R}^3}\int_{\mathbb{R}^3}\int_{\mathbb{S}_{+}^2} |(v-v_*)\cdot n| f(t,{'v}_*)g(t,{'v})\delta(v-v_1)\: dn  dv_*dv \\
     &= \frac{1}{\alpha^2} \int_{\mathbb{R}^3}\int_{\mathbb{S}_{+}^2}  |(v_1-v_*)\cdot n| f(t,{'v}_*)g(t,{'v}_1)\: dn  dv_*.
 \end{split}
\end{align} And we have
\begin{align} \label{def_Lf}
\begin{split}
      Lf(t,v_1) &= \int_{\mathbb{R}^3}\int_{\mathbb{S}_{+}^2} |(v_1-v_*)\cdot n| f(t,v_*)\: dn 
    dv_* \\
    &= \pi \int_{\mathbb{R}^3} |v_1-v_*| f(t,v_*)\; dv_*.
\end{split}
\end{align} Before, we parameterize $v'$ and $v'_*$ in two different ways by using impact direction $n \in \mathbb{S}_{+}^2$ in \eqref{v'}, and $\sigma\in \mathbb{S}^2$ in \eqref{v'_sigma}. In Figure \ref{fig_v'}, there are $n$ and $\sigma$, and we get
\begin{align} \label{theta-2theta}
    \frac{v_*-v}{|v_*-v|} \cdot \sigma = \cos 2\theta
    \text{\quad where \quad} \frac{v_*-v}{|v_*-v|} \cdot n = \cos \theta.
\end{align} If we choose and fix the direction of $v_*-v$ as the $z$-axis, we can consider spherical coordinate of
\begin{align}\label{coordinate_n,sigma}
    n = (\cos \phi \sin \theta, \sin \phi \sin \theta, \cos \theta), 
    \quad \sigma = (\cos \phi \sin 2\theta, \sin \phi \sin 2\theta, \cos 2\theta)
\end{align} for $\phi \in [0,2\pi], \;\theta \in \left[0,\frac{\pi}{2} \right]$. By using $\sigma$ of instead $n$, we can focus the amplitude, $|v-v_*|$ in $Q^{+}(f,f)(t,v_1)$ without $\cos \theta$ term in \eqref{def_Q+}. Moreover, we define the rescaled $\sigma'$ as
\begin{align} \label{def_simga'}
    \sigma'= \frac{\beta}{2}|v-v_*| \sigma,
\end{align} which is used in the proof of Lemma \ref{Q+_E_estimate}. Using
\begin{align} \label{jacobian_n,simga}
    d \sigma = 4 \cos \theta \; d n \text{\quad and \quad } d\sigma' = \frac{\beta^2}{4}|v-v_*|^2 d\sigma, 
\end{align} the weak form of the $Q^{+}(f,g)(v)$ can be expressed in the variables $\sigma$ and $\sigma'$ as follows:
\begin{align}
    &\int_{\mathbb{R}^3} Q^{+}(f,g)(t,v) \phi(v) \; dv  \notag \\
    &= \int_{\mathbb{R}^3}\int_{\mathbb{R}^3}\int_{\mathbb{S}_{+}^2}  |(v-v_*)\cdot n| f(t,v_*)g(t,v)\phi(v')\: dn  dv_*dv \notag \\
    &=\frac{1}{4} \int_{\mathbb{R}^3}\int_{\mathbb{R}^3}\int_{\mathbb{S}^2}  |v-v_*| f(t,v_*)g(t,v)\phi(v') \: d\sigma dv_*dv \label{Q+_weak_sigma} \\
    &= \frac{1}{\beta^2} \int_{\mathbb{R}^3}\int_{\mathbb{R}^3}\int_{ B\left(\frac{\beta}{2}|v-v_*| \right)}  |v-v_*|^{-1} f(t,v_*)g(t,v)\phi(v') \: d\sigma' dv_*dv, \label{Q+_weak}
\end{align} where
\begin{align*}
    B\left(\frac{\beta}{2}|v-v_*| \right)=\left\{ x \in \mathbb{R}^3 : \;|x|= \frac{\beta}{2}|v-v_*| \text{\quad for \quad} v, v_* \in \mathbb{R}^3 \right\}.
\end{align*} 

\subsubsection{Carleman type representation for
granular gases} In 1957, Carleman introduced a new representation of the gain term collision operator for the Boltzmann equation with a cutoff kernel. When \(v'\) and \(v\) are fixed, the set of possible vectors for \(v_*'\) forms a hyperplane orthogonal to \(v - v'\) and passing through \(v\) (see Figure \ref{fig_v'_elastic}). Using this fact, he obtained the gain term integrated over the variables \(v'\) and \(v_*'\), where the range is in \(\mathbb{R}^3\) and a hyperplane for each, as described in \cite{T1957}. Moreover, there is a Carleman-type representation for granular gas presented in \cite[Proposition 1.5]{SC2006} by Mischler and Mouhot. To obtain a Carleman-type representation, we define a vector \(P\) as:
\begin{align*} 
    P &= \frac{1}{\beta}v'-\left(\frac{1}{\beta}-1\right)v,
\end{align*} which is located on the extension line of $v$ and $v'$ in ratio 
\begin{align*}
    |P-v|:|P-v'|=1:1-\beta
\end{align*} for $\beta \neq 1$ as in Figure \ref{fig_v'}. For $\beta=1$ (elastic collisions), the vector $P$ equals $v'$. Then $P$ also satisfies $ P-v\;\bot\;P-v_*$.
Next, we introduce a 2-dimensional hyperplane $E^{v'-v}_P$ which is normal to the vector $v'-v$ around the vector $P$; it is defined as 
\begin{align} \label{E_pvv}
\begin{split}
        E^{v'-v}_P =\bigg \{x \in \mathbb{R}^3 : \; (x-P)\;\bot \;(v-v') \text{\; where \;} P=\frac{1}{\beta}v'-\left(\frac{1}{\beta}-1\right)v\;\bigg \} \subseteq \mathbb{R}^2.
\end{split}
\end{align} We notice that if $v$ and $v'$ are fixed, $v_*'$ is in the hyperplane $E^{v'-v}_P$.  Then $Q^{+}(f,g)(t,v_1)$ is expressed by
\begin{align} \label{Q+_Cal}
\begin{split}
        &Q^{+}(f,g)(t,v_1) \\
    &= \int_{\mathbb{R}^3} Q^{+}(f,g)(t,v)\delta(v-v_1) \; dv\\
    &= \frac{1}{\beta^2} \int_{\mathbb{R}^3}\int_{\mathbb{R}^3} \delta(v-v_1) g(t,{'v}) |v-{'v}|^{-1}\int_{{'v}_* \in E^{v-{'v}}_P} f(t,{'v}_*) \;dE_{{'v}_*} d{'v} dv \\
    &= \frac{1}{\beta^2} \int_{\mathbb{R}^3}\int_{\mathbb{R}^3} \delta(v'-v_1) g(t,v) |v'-v|^{-1}\int_{v_* \in E^{v'-v}_P} f(t,v_*) \;dE_{v_*} dv dv',
\end{split}
\end{align} 
where $dE_{{'v}_*}$ and $dE_{v_*}$ are the Lebesgue measures on the plane $E^{v-{'v}}_P$ and $E^{v'-v}_P$, respectively. This Carleman-type representation is used crucially for the estimates of the gain term, $Q^{+}(f,f)$, through the paper. 

\subsection{Main results}
We are now ready to state our main theorems. Given $t>0$ and $v \in \mathbb{R}^3$, we prove that the solution \(f(t,v)\) is bounded pointwise from above by \(C_{1,f_0}(1+t)^3\) in the region including zero velocity, and by \( C_{2,f_0}(1+t)\), away from the zero velocity ($v \neq 0$).
We have also derived an upper bound that depends on the coefficient of normal restitution constant $\alpha \in (0,1]$ and the time $t>0$. We notice this upper bound becomes smaller as $\alpha \rightarrow 1$, and it becomes a constant when $\alpha =1$.

Moreover, the first part of the minimum in \eqref{thm_v except 0} indicates that the density distribution function during the cooling process is very sensitive to the temporal variable when the velocity is near zero. The growth of this bound can be considered to be natural, as shown in Figure \ref{graph}, during the cooling process. In addition, we observe that there is a loss of weight in the \(L^{\infty}\) propagation in the first part of the minimum in \eqref{thm_v include 0}.
Here, we define the cooling time $T_c$ in the cooling process as 
    \begin{align} \label{Tc}
    \begin{split}
      T_c :&= \inf {\{T \geq 0, \;\mathcal{E}(t)=0 \;\; \forall t>T\}}\\
        &=\sup {\{ S \geq 0, \;  \mathcal{E}(t)>0 \;\;  \forall t<S \}},
    \end{split}
    \end{align} where $\mathcal{E}$ is the total kinetic energy, which is defined as $\mathcal{E}(t)= \int_{\mathbb{R}^3} f(t,v)|v|^2 dv$. 

\begin{theorem} \label{L_infty_thm} 
Let $0 <\alpha \leq 1$ be the coefficient of normal restitution constant, which satisfies \eqref{alpha}. Assume that \eqref{initial contidion} holds and $f_0 \in L_{s}^{\infty}$ for $s>2$. 

\noindent(i) Then there exist positive constants $C_1(\|f_0\|_{\infty,s})$ and $C_2$ such that 
    \begin{align} \label{thm_v include 0}
        f(t,v) \leq C_1(\|f_0\|_{\infty,s})\min{\left\{ \frac{(1+t)^3}{1+|v|},  \frac{e^{C_2\left(\frac{1}{\alpha^2}-1\right)t}}{(1+|v|)^2}\right\}}
    \end{align} for almost every $v \in \mathbb{R}^3$ and for every $t>0$. 
    
\noindent (ii)   Moreover, there exists a positive constant $C_3(\|f_0\|_{\infty,s})$ such that
      \begin{align} \label{thm_v except 0}
        f(t,v) \leq C_3(\|f_0\|_{\infty,s})\min{\left\{ \frac{1+t}{|v|^2},  \frac{e^{C_2\left(\frac{1}{\alpha^2}-1\right)t}}{(1+|v|)^2}\right\}}
    \end{align} for almost every $v \neq 0 \in \mathbb{R}^3$ and for every $t>0$. 
\end{theorem}

\begin{corollary} \label{Tc_Cor}
    Assume that \eqref{initial contidion} holds and $f_0 \in L_{s}^{\infty}$ for $s>2$. Then the cooling time $T_c$ of a solution, defined in \eqref{Tc}, is infinite.
\end{corollary}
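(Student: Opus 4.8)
The plan is to read off the corollary from the pointwise bound of Theorem \ref{L_infty_thm}(i) together with mass conservation. The first step is a harmless reformulation: by the definition \eqref{Tc}, proving $T_c=+\infty$ amounts to showing that $\mathcal{E}(t)>0$ for every finite $t\ge 0$. Indeed, if $\mathcal{E}(t)>0$ for all $t\ge 0$, then for no $T\ge 0$ can one have $\mathcal{E}(t)=0$ for all $t>T$ (since $\mathcal{E}(T+1)>0$), so the set $\{T\ge 0:\ \mathcal{E}(t)=0\ \ \forall t>T\}$ is empty and $T_c=\inf\varnothing=+\infty$; equivalently the set $\{S\ge 0:\ \mathcal{E}(t)>0\ \ \forall t<S\}$ is all of $[0,\infty)$. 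Note also that $\mathcal{E}(0)=\int_{\mathbb{R}^3}f_0(v)|v|^2\,dv=1>0$ by the normalization \eqref{initial contidion}, and that by \eqref{initial_bound} the total mass is conserved, $\|f(t,\cdot)\|_{1,0}=1$ for all $t\ge 0$.

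The main step is then a short contradiction argument at an arbitrary fixed time. Fix $t\ge 0$ and invoke Theorem \ref{L_infty_thm}(i): the bound \eqref{thm_Linfty_0} gives $\esssup_{v\in\mathbb{R}^3}f(t,v)\le 32\pi^2\|f_0\|^4_{1,0}\,t^3+C_1(t+t^2)+\|f_0\|_{\infty,0}<\infty$, so in particular $f(t,\cdot)$ is a genuine bounded, nonnegative, Lebesgue-measurable function (not merely a measure). Suppose, for contradiction, that $\mathcal{E}(t_0)=\int_{\mathbb{R}^3}f(t_0,v)|v|^2\,dv=0$ for some finite $t_0\ge 0$. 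Since $f(t_0,\cdot)\ge 0$, this forces $f(t_0,v)|v|^2=0$ for a.e.\ $v$, hence $f(t_0,v)=0$ for a.e.\ $v\in\mathbb{R}^3\setminus\{0\}$; because $\{0\}$ is Lebesgue-null and $f(t_0,\cdot)$ is a function, $f(t_0,v)=0$ for a.e.\ $v\in\mathbb{R}^3$. This gives $\|f(t_0,\cdot)\|_{1,0}=0$, contradicting $\|f(t_0,\cdot)\|_{1,0}=1$. Therefore $\mathcal{E}(t)>0$ for every finite $t\ge 0$, and by the reformulation above $T_c=+\infty$.

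There is no serious technical obstacle here — the statement really is a corollary in the literal sense — so the only point worth emphasizing is conceptual. A priori, during the cooling process the solution could lose all of its mass to a concentration at $v=0$ in finite time; this is precisely the finite-time cooling scenario $T_c<\infty$, in which $f(T_c,\cdot)$ would be a multiple of $\delta_{v=0}$ and $\mathcal{E}(T_c)=0$. Theorem \ref{L_infty_thm}(i) rules this out quantitatively, since its pointwise upper bound is finite at every finite $t$, so no Dirac mass can form before $t=+\infty$; mass conservation then prevents $\mathcal{E}(t)$ from vanishing. One could alternatively run the same argument with the away-from-zero bound \eqref{thm_Linfty_1} on a shrinking annulus, but the global bound \eqref{thm_Linfty_0} already suffices and is the cleanest route.
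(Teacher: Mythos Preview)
Your argument is correct and follows the paper's route: assume a finite time at which $\mathcal{E}$ vanishes, invoke Theorem~\ref{L_infty_thm}(i) together with mass conservation, and reach a contradiction. The only difference is in the final step --- the paper localizes to $\{|v|<r\}$ and uses $1=\int_{|v|<r}f(T_c,v)\,dv\le \tfrac{2\pi^2}{3}r^3\|f(T_c,\cdot)\|_\infty\le Cr^3(1+T_c)^3$ before sending $r\to 0$, which makes the quantitative role of \eqref{thm_Linfty_0} explicit, whereas you shortcut this by noting that $\int f|v|^2=0$ with $f\ge 0$ already forces $f=0$ a.e.\ since $\{v=0\}$ is Lebesgue-null.
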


\begin{remark}
Theorem \ref{L_infty_thm} holds even for the infinite energy case as long as we have $f(t,\cdot)\in L^1_1$ for any $t\ge 0$; i.e., the finiteness condition on the initial energy in \eqref{initial contidion} can be replaced by $f(t,\cdot)\in L^1_1$ for $t\ge 0.$ Either one of these can guarantee that $Lf$ be well-defined in Lemma \ref{Lf>|v|}.
\end{remark}

\begin{remark} \label{remark_Tc}
    In \cite{SCM2006}, Mischler, Mouhot and Ricard studied the spatially homogeneous inelastic Boltzmann equation for hard spheres, considering a general form of collision rate that includes variable restitution coefficients depending on the kinetic energy and the relative velocity. In Theorem 1.2, they consider the case where the collision rate B is independent of the kinetic energy. In particular, according to Theorem 1.2-(i) and Remark 1.3-(1), for inelastic hard spheres with a constant restitution coefficient, which we studied in this paper, there exists a unique solution under the assumption of \eqref{initial contidion}, and the cooling time of the solution is infinite. In Corollary \ref{Tc_Cor}, we obtain cooling time $T_c$ is infinite, but we also assume $f_0 \in L^{\infty}_s$.
 \end{remark}

Many researchers are interested in the upper Maxwellian bound, lower Maxwellian bound, and the convergence of the system to the Maxwellian state in the elastic Boltzmann equation, where $\alpha$ is 1. Due to the loss of energy in inelastic collisions, we also expect the solution to exhibit thinner velocity tails in the inelastic Boltzmann equation and to approach the Maxwellian upper bound for sufficiently large velocities for all \(0 < \alpha \leq 1\). One of the most significant aspects of Theorem \ref{L_infty_thm} is that it provides pointwise upper bounds, allowing us to predict the behavior of the solution. Using these upper bound estimates, we will establish the Maxwellian upper bound.

\begin{theorem} \label{thm_upper}
Let $0 <\alpha \leq 1$ be the coefficient of normal restitution constant, which satisfies \eqref{def_alpha_2}. We assume that \eqref{initial contidion} holds, and that $f_0(v) \leq M_0(v)$ for almost every $v \in \mathbb{R}^3$, where $M_0(v) = e^{-a_0|v|^2+c_0}$. Then there exist $a \in (0,a_0), \;b \in \mathbb{R}$ and $C>0$, depending on $a_0$ and $c_0$, such that
    \begin{align} \label{upper_maxwell}
        f(t,v) \leq \min{\left\{(1+t)^3, e^{C\left(\frac{1}{\alpha^2}-1\right)t} \right\}} e^{-a|v|^2+b}
    \end{align} for almost every $v \in \mathbb{R}^3$ and every $t > 0$.
\end{theorem}

\begin{remark}
Fix the coefficient of normal restitution constant $0<\alpha<1$, and denote $T_\alpha = \alpha^2/(1-\alpha^2)$. Then, by Theorem \ref{L_infty_thm}-(i), there is a positive constant $\overline{C}(\|f_0\|_{\infty,s})$ such that
  \begin{align*} 
          \esssup_{v \in \mathbb{R}^3 } f(t,v)  (1+|v|)^2\leq \overline{C}(\|f_0\|_{\infty,s})
    \end{align*} for $0<t<T_\alpha$, under \eqref{initial contidion} and that $f_0 \in L_{s}^{\infty}$ for $s>2$. By Theorem \ref{thm_upper}, there exist $\overline{a} \in (0,a_0), \;\overline{b} \in \mathbb{R}$, depending on $a_0$ and $c_0$, such that
     \begin{align*}
        f(t,v) \leq  e^{-\overline{a}|v|^2+\overline{b}}
    \end{align*} for almost every $v \in \mathbb{R}^3$ and $0<t<T_\alpha$ under $f_0(v) \leq M_0(v)$.
\end{remark}
\begin{remark} \label{thm_remark}
    In \cite{SCM2006}, in Proposition 3.2-(ii), Mischler, Mouhot, and Ricard proved that there exist $C, r'>0$ such that
    \begin{align*}
        \sup_{t\in[0,T_c)} \int_{\mathbb{R}^N} f(t,v) e^{r'|v|^{\eta}} dv \leq C
    \end{align*} if $f_0(v)e^{r|v|^{\eta}} \in L^{1}_0(\mathbb{R}^N)$ for $r>0$ and $\eta \in (0,2]$, and $T_c$ is defined as \eqref{Tc}. 
\end{remark} 

Under elastic collisions, the total entropy remains finite for all \(t > 0\) by the \(H\)-theorem if it is initially finite. As a consequence, it establishes a uniform lower bound for \(Lf(t,v)\) as stated in Lemma 4 of \cite{L1983}, which has been used to provide a pointwise upper-bound of solution via the Gr\"onwall inequality as in \cite{L1983}. On the other hand, we observe that the inelastic collisions lack an \(H\)-theorem and a uniform lower bound for \(Lf(t,v)\). Thus, we derive two weaker versions for lower bounds of $Lf(t,v)$ in Lemma \ref{Lf>|v|}. Since these lower bounds are not uniform with respect to velocity and time, $L^{\infty}$ norms in Theorem \ref{L_infty_thm} depend also on time.

\subsection{Outline of the rest of the article}
Here we briefly outline the structure of the remainder of the paper. 
In Section \ref{sec.Linfty}, we prove \(L^{\infty}\) estimates. First, we obtain the upper bound of the gain term of the collision operator, \(Q^{+}(f,f)\), in Lemma \ref{Q+_estimate} through Lemma \ref{f_esti_lemma}. Next, we apply Grönwall's lemma in Lemma \ref{sup_f} to the Boltzmann equation to prove Theorem \ref{L_infty_thm}. In Section \ref{sec.Maxupper}, based on a comparison principle in Lemma \ref{comparison}, we extend \(L^{\infty}\) estimates to establish the Maxwellian upper bound.

\section{\texorpdfstring{$L^{\infty}$}{} estimates for the inelastic Boltzmann equation }
\label{sec.Linfty}

In this Section, we estimate the \(L^{\infty}\) norm of the solution to the inelastic Boltzmann equation. Let $0 <\alpha \leq 1$ be the coefficient of normal restitution constant, which satisfies \eqref{def_alpha_2}. The following Lemma \ref{sup_f}, which is derived from Grönwall's inequality, will be repeatedly used in the paper. \\

\begin{lemma}\label{sup_f}
Let $h_1(t)$ and $h_2(t)$ be continuous real functions on $\mathbb{R}_{+}$ and $h_1(t)>0$. If
    \begin{align*}
        \frac{d}{dt}f + h_1 f \leq h_2
    \end{align*} for $t>0$, then
    \begin{align*}
         f(t) \leq  \sup_{0 \leq s \leq t} \frac{h_2(s)}{h_1(s)} + f(0)
    \end{align*} for $t>0$.
\end{lemma}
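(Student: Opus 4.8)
The plan is to treat this as a standard differential-inequality (Grönwall-type) argument and to be slightly careful about the claim, since as literally stated it requires $\sup_{t>0} h_2(t)/h_1(t)$ to be finite (otherwise the inequality is vacuous). First I would integrate the inequality $\frac{d}{dt}f + h_1 f \le h_2$ against the integrating factor $e^{\int_0^t h_1(s)\,ds}$. Writing $H(t) := \int_0^t h_1(s)\,ds$, which is well-defined and nondecreasing since $h_1$ is continuous and positive, the inequality becomes $\frac{d}{dt}\big(e^{H(t)} f(t)\big) \le e^{H(t)} h_2(t)$. Integrating from $0$ to $t$ yields
\begin{align*}
    e^{H(t)} f(t) \le f(0) + \int_0^t e^{H(s)} h_2(s)\, ds.
\end{align*}

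Next I would bound the integral on the right. Set $K := \sup_{s>0} h_2(s)/h_1(s)$; we may assume $K < \infty$, else there is nothing to prove. Then $h_2(s) \le K\, h_1(s) = K\, H'(s)$ pointwise, so
\begin{align*}
    \int_0^t e^{H(s)} h_2(s)\, ds \le K \int_0^t e^{H(s)} H'(s)\, ds = K\big(e^{H(t)} - e^{H(0)}\big) = K\big(e^{H(t)} - 1\big) \le K\, e^{H(t)}.
\end{align*}
Combining, $e^{H(t)} f(t) \le f(0) + K\, e^{H(t)}$, and dividing through by $e^{H(t)} \ge 1$ gives $f(t) \le K + f(0)$ for every $t > 0$. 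Taking the supremum over $t > 0$ yields the claim $\sup_{t>0} f(t) \le \sup_{t>0} \frac{h_2(t)}{h_1(t)} + f(0)$.

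There is no serious obstacle here; this is a routine computation. The only points requiring mild care are: (a) the regularity needed to justify the integrating-factor manipulation — continuity of $h_1, h_2$ and (absolute) continuity / differentiability of $f$ suffice, and the differential inequality is really being used in integrated form, so one could equally phrase the whole argument with Grönwall's inequality in integral form; (b) the sign observation that $e^{H(t)} \ge e^{H(0)} = 1$, which is what lets us divide without worsening the constant; and (c) noting that the stated bound is only meaningful when the supremum $\sup_{t>0} h_2(t)/h_1(t)$ is finite, which will indeed be the case in every application in Sections \ref{sec.Linfty} and \ref{sec.weightedLinfty}, where $h_1$ is a strictly positive lower bound for $Lf$ and $h_2$ is the polynomially growing bound on $Q^+(f,f)$.
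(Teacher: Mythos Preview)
Your proof is correct and follows essentially the same route as the paper: multiply by the integrating factor $e^{H(t)}$, integrate, bound $h_2(s) \le K h_1(s)$ with $K=\sup h_2/h_1$, compute $\int_0^t e^{H(s)} h_1(s)\,ds = e^{H(t)}-1$, and divide back. The paper's version keeps the intermediate factor $(1-e^{-H(t)})$ where you drop the $-1$, but the final bound is identical; your added remarks on regularity and on the finiteness of $\sup h_2/h_1$ are accurate and match how the lemma is used later.
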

\begin{proof}
From
\begin{align*}
    \frac{d}{dt}\left( e^{\int^t_0 h_1(s) ds} f\right) \leq 
     e^{\int^t_0 h_1(s) ds} h_2(t),
\end{align*} we obtain
\begin{align*}
    f(t) &\leq \int^t_0 e^{-\int^t_\tau h_1(s) ds} h_2(\tau) \; d\tau + f(0) \\
    &\leq \left(1-e^{-\int^t_0 h_1(s) ds} \right)\sup_{0 \leq s \leq t} \frac{h_2(s)}{h_1(s)}  + f(0) \leq  \sup_{0 \leq s \leq t} \frac{h_2(s)}{h_1(s)} + f(0)
\end{align*} for $t> 0$.
\end{proof}

\begin{lemma} \label{entropy_lemma}
Assume that \eqref{initial contidion} holds, and that the initial entropy $H_0$ in \eqref{def_H0} is finite. Then
there exists a positive constant $C$ that depends only on $\|f_0\|_{1,2}$, such that
    \begin{align} \label{entropy_under_t}
       \int_{\mathbb{R}^3} f(t,v) \log f(t,v) dv \leq C t\left(  \frac{1}{\alpha^2} -1  \right) + H_0
    \end{align} for $t>0$.
\end{lemma}
\begin{proof}
  We denote
$f:=f(t,v),\;f_{*}:=f(t,v_*),\;f':=f(t,v'),\;f_{*}':=f(t,v_*')$. From the weak form of the collision operator \eqref{weak_4}, we have
    \begin{align}
           &\frac{d}{dt}\int_{\mathbb{R}^3} f \log f \;dv= \int_{\mathbb{R}^3}  Q(f,f)(t,v) \log f \;dv \notag \\
        &= \frac{1}{2} \int_{\mathbb{R}^3} \int_{\mathbb{R}^3} \int_{\mathbb{S}_{+}^2}
        f f_{*} \log \frac{f'f_*'}{f f_{*}} |(v-v_*) \cdot n | \;dn dv dv_* \notag \\
        &= \frac{1}{2}\int_{\mathbb{R}^3} \int_{\mathbb{R}^3} \int_{\mathbb{S}_{+}^2} ff_{*}
        \left(  \log \frac{f'f_*'}{f f_{*}} - \frac{f'f_*'}{f f_{*}} +1   \right)|(v-v_*) \cdot n | \;dn dv dv_* \notag \\
        &\quad + \frac{1}{2}\int_{\mathbb{R}^3} \int_{\mathbb{R}^3} \int_{\mathbb{S}_{+}^2}
        \left(  f'f_*'-f f_*  \right)|(v-v_*) \cdot n |\; dn dv dv_*. \label{alpha_term} 
    \end{align} 
    Taking a change of variables $(v,v_*,n) \rightarrow (v',v_*',-n)$ with the Jacobian determinant $\alpha$ and \eqref{alpha}, we have
    \begin{align} \label{entropy_change of variable}
    \begin{split}
        &\int_{\mathbb{R}^3} \int_{\mathbb{R}^3} \int_{\mathbb{S}_{+}^2}
        f'f_*'|(v-v_*) \cdot n |\; dn dv dv_*\\
        &=  \frac{1}{\alpha^2}\int_{\mathbb{R}^3} \int_{\mathbb{R}^3} \int_{\mathbb{S}_{+}^2}
        ff_*|(v-v_*) \cdot n | \;dn dv dv_*.
    \end{split}
    \end{align} By using $\log x -x + 1 \leq 0$ for $x>0$, and applying \eqref{entropy_change of variable} to \eqref{alpha_term}, we have
    \begin{align} \label{int_entropy}
         &\frac{d}{dt}\int_{\mathbb{R}^3} f \log f \;dv \leq 
         \frac{1}{2} \left(  \frac{1}{\alpha^2} -1  \right)\int_{\mathbb{R}^3} \int_{\mathbb{R}^3} \int_{\mathbb{S}_{+}^2}
        ff_*|(v-v_*) \cdot n | \;dn dv dv_*.
    \end{align} By the assumption \eqref{initial contidion} on the profile $f_0$, we obtain
    \begin{align*}
        \int_{\mathbb{R}^3} \int_{\mathbb{R}^3} \int_{\mathbb{S}_{+}^2} 
        ff_*|(v-v_*) \cdot n | \;dn dv dv_* &\lesssim 1.
    \end{align*} Lastly, by integrating \eqref{int_entropy} with respect to $t>0$, we obtain
    \begin{align*}
        \int_{\mathbb{R}^3} f(t,v) \log f(t,v)\; dv \leq C t\left(  \frac{1}{\alpha^2} -1  \right) + H_0
    \end{align*} for some positive constant $C$ that depends only on $\|f_0\|_{1,2}$.
\end{proof}

\begin{remark}
The quantitative variants of Boltzmann's H-theorem were proven in \cite{MR1700142, MR1964379}. In \cite{DV2005}, by using H-theorem, convergence to equilibrium for solutions of the inhomogeneous Boltzmann equation was studied. The H-theorem was also proven for the spatially homogeneous relativistic Boltzmann equation in \cite{MR4271957, MR3166961}.
\end{remark}

We first estimate $Lf(t,v) $ in \eqref{def_Lf}, and later, the lower bound of $Lf(t,v)$ will serve the role of $h_1(t)$ in Lemma \ref{sup_f} for the Boltzmann equation. 

\begin{lemma}\label{Lf>|v|}
Assume that \eqref{initial contidion} holds, and that the initial entropy $H_0$ in \eqref{def_H0} is finite. Then there exist positive constants $C_1$ and $C_2$ such that
    \begin{align} \label{Lf_3}
        Lf(t,v_1) \geq \max{\left\{\pi|v_1|, \;C_1 e^{-C_2 \left(\frac{1}{\alpha^2}-1 \right)t}(1+|v_1|)\right\}}
    \end{align} for any $v_1 \in \mathbb{R}^3$ and $t>0$.
\end{lemma}

\begin{proof}
Since $\phi(s)=|s|$ is a convex function, by Jensen's inequality, we have
    \begin{align} \label{Lf_1}
    \begin{split}
        Lf(t,v_1)&= \int_{\mathbb{R}^3} \int_{\mathbb{S}_{+}^2} |(v_1-v_*)\cdot n|f(t,v_*) \;dndv_* = \pi \int_{\mathbb{R}^3}  |v_1-v_*|\;f(t,v_*) \; dv_* \\
        &\geq \pi \left|\sum_{i=1,2,3}\left\{(v_1 \cdot e_i) \int_{\mathbb{R}^3} f(t,v_*)\; dv_* -\int_{\mathbb{R}^3} (v_* \cdot e_i) f(t,v_*) \; dv_* \right\} e_i\right|\\
        &=\pi \left|\sum_{i=1,2,3}\left\{(v_1 \cdot e_i) \times 1 - 0\right\} e_i\right|
        = \pi |v_1|
      \end{split}
    \end{align} for $v_1 \in \mathbb{R}^3$. 
    
On the other hand, by Lemma \ref{entropy_lemma}, we have
\begin{align}
    &\int_{\{|v_1-v_*|<r\}} f(t,v_*) \; dv_* \notag \\
    &\leq \int_{\{|v_1-v_*|<r,\;f(t,v_*)\leq j\}} f(t,v_*) \; dv_* + \int_{\{|v_1-v_*|< r,\;f(t,v_*)>j\}} f(t,v_*) \; dv_* \notag \\
    &\leq \frac{4\pi}{3} j r^3  +  \frac{1}{\log j } \left( C t\left(  \frac{1}{\alpha^2} -1  \right) + H_0\right)\label{r,j}
\end{align} for $r, j >0$. Taking
\begin{align*}
    \log j = \frac{4}{\|f_0\|_{1,0}}\left( C t\left(  \frac{1}{\alpha^2} -1  \right) + H_0\right),  \quad  r = \left(\frac{3}{8\pi} \|f_0\|_{1,0}  \frac{1}{j}  \right)^{\frac{1}{3}}
\end{align*} in \eqref{r,j}, we find that
\begin{align*}
      &\int_{\{|v_1-v_*|<r\}} f(t,v_*) \; dv_* \leq \frac{1}{2}\|f_0\|_{1,0} .
\end{align*} Then we have
\begin{align} \label{Lf>e}
\begin{split}
    L(f)(t,v_1) &= \pi \int_{\mathbb{R}^3} f(t,v_*)|v_1-v_*| \; dv_1 \geq \pi r\int_{\{|v_1-v_*|\geq r\}} f(t,v_*) \; dv_*         \\
    &=\pi r\left( \int_{\mathbb{R}^3} f(t,v_*) \; dv_* - \int_{\{|v_1-v_*|<r\}} f(t,v_*) \; dv_*    \right) \\
    &\geq \frac{1}{2}\pi r \|f_0\|_{1,0} = \frac{\pi}{2} \left(\frac{3}{4\pi} \right)^{\frac{1}{3}} \|f_0\|_{1,0}^{\frac{4}{3}}e^{-\frac{4}{3 \|f_0\|_{1,0}}\left( C t\left(  \frac{1}{\alpha^2} -1  \right) + H_0\right)}\\
    &\geq c_1 e^{-c_2\left(\frac{1}{\alpha^2}-1 \right)t}
\end{split}
\end{align} 
for some positive constants $c_1$ and $c_2$. From \eqref{Lf_1}, we obtain
\begin{align*}
    L(f)(t,v_1) \geq |v_1| \geq |v_1| e^{-c_2\left(\frac{1}{\alpha^2}-1 \right)t}
\end{align*}
since $e^{-x} \leq 1$ for $x \geq 0$. Then, by \eqref{Lf>e}, we obtain
\begin{align*}
     L(f)(t,v_1) \geq \frac{1}{2}\left( c_1+|v_1| \right)e^{-c_2\left(\frac{1}{\alpha^2}-1 \right)t} \geq C_1(1+|v_1|)e^{-c_2\left(\frac{1}{\alpha^2}-1 \right)t}
\end{align*}
for some positive constants $C_1$ and $c_2$.
\end{proof}

\begin{remark}
    The finiteness condition on the initial energy in \eqref{initial contidion} can be replaced by $f(t,\cdot)\in L^1_1.$ Either of these can guarantee that $Lf$ be well-defined.
\end{remark}

\begin{remark}
    In \cite{L1983}, in Lemma 4, Arkeryd proved that
    \begin{align*}
        Lf(t,v) \geq C(C_0, \|f\|_{1,2}, \|f\|_{1,0})(1+|v|)^{\gamma}, \quad \gamma \in (0,1]
    \end{align*} under the conditions $f \in L^1_2$ and $\int_{\mathbb{R}^3}f \log f \; dv < C_0$ for the homogeneous Boltzmann equation for hard potentials with angular cutoff. For the hard spheres case, $\gamma$ is 1. In elastic collisions, it holds that $\int_{\mathbb{R}^3}f \log f \; dv  \leq \int_{\mathbb{R}^3}f_0 \log f_0 \; dv$ by the H-theorem. However, we cannot control $\int_{\mathbb{R}^3}f \log f \; dv $ in inelastic collisions, so $Lf(t,v) \gtrsim 1$ does not hold. 
\end{remark}

\begin{lemma}\label{Q+_estimate} Let $f(t,\cdot) \in L_{0}^{1}$. For any given fixed $v\in\mathbb{R}^3$ and $t>0$, we have
\begin{align*}
    \left\| \int_{\mathbb{R}^3} Q^{+}(f,f)(t,v_1)|v_1-v|^{-1}\;dv_1 \right\|_{\infty,0} \; \leq \;  4\pi\|f\|_{1,0}^2.
\end{align*} 
\end{lemma}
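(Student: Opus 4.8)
The plan is to test the gain term against the nonnegative, locally integrable weight $\phi(\cdot)=|\,\cdot\,-v|^{-1}$ and reduce the whole expression to an elementary spherical average. Since $f\ge 0$, both $Q^{+}(f,f)$ and $|w-v|^{-1}$ are nonnegative and $|w-v|^{-1}\in L^{1}_{\mathrm{loc}}(\mathbb{R}^3)$, so the weak formulation \eqref{Q+_weak_sigma} — which extends from $C^\infty_c$ to arbitrary nonnegative measurable test functions by monotone convergence and Tonelli's theorem — gives, for any fixed $v\in\mathbb{R}^3$,
\begin{align*}
    \int_{\mathbb{R}^3} Q^{+}(f,f)(t,v_1)\,|v_1-v|^{-1}\,dv_1 = \frac14 \int_{\mathbb{R}^3}\int_{\mathbb{R}^3}\int_{\mathbb{S}^2} |w-w_*|\,f(t,w_*)f(t,w)\,|w'-v|^{-1}\,d\sigma\,dw_*\,dw ,
\end{align*}
where, by \eqref{v'_sigma}, $w'=\tfrac{w+w_*}{2}+\tfrac{1-\beta}{2}(w-w_*)+\tfrac{\beta}{2}|w-w_*|\sigma$. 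All quantities are well defined in $[0,+\infty]$, and the common value will be seen a posteriori to be finite.

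Next I would estimate the inner $\sigma$-integral. For fixed $w,w_*$, as $\sigma$ ranges over $\mathbb{S}^2$ the point $w'-v$ traverses the sphere of radius $\rho:=\tfrac{\beta}{2}|w-w_*|$ centered at $c:=\tfrac{w+w_*}{2}+\tfrac{1-\beta}{2}(w-w_*)-v$. With $r:=|c|$ and coordinates chosen so that $c=r e_3$, the Newtonian spherical average is computed explicitly:
\begin{align*}
    \int_{\mathbb{S}^2}\frac{d\sigma}{|w'-v|} = \int_{\mathbb{S}^2}\frac{d\sigma}{|c+\rho\sigma|} = 2\pi\int_{-1}^{1}\frac{ds}{\sqrt{r^2+\rho^2+2r\rho s}} = \frac{2\pi\bigl((r+\rho)-|r-\rho|\bigr)}{r\rho} = \frac{4\pi}{\max(r,\rho)} \le \frac{4\pi}{\rho} = \frac{8\pi}{\beta\,|w-w_*|} .
\end{align*}
The crucial point is that this bound is independent of $c$, hence of $v$, which is exactly what makes the final estimate uniform in $v$.

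Plugging this into the identity above, the troublesome factor $|w-w_*|$ cancels against $\tfrac{1}{|w-w_*|}$, and one is left with
\begin{align*}
    \int_{\mathbb{R}^3} Q^{+}(f,f)(t,v_1)\,|v_1-v|^{-1}\,dv_1 \le \frac14\cdot\frac{8\pi}{\beta}\Bigl(\int_{\mathbb{R}^3} f(t,w)\,dw\Bigr)^{2} = \frac{2\pi}{\beta}\,\|f\|_{1,0}^{2} \le 4\pi\,\|f\|_{1,0}^{2} ,
\end{align*}
where the last inequality uses $\beta=\tfrac{1+\alpha}{2}>\tfrac12$. Since the right-hand side does not depend on $v$, taking $\esssup_{v\in\mathbb{R}^3}$ yields the claimed bound. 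There is no serious obstacle here: the one point that needs a word of care is licensing the singular test function $\phi=|\,\cdot\,-v|^{-1}$ in the weak form, and this is handled by the nonnegativity/Tonelli remark above, after which the computation itself certifies finiteness.
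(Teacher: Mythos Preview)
Your proof is correct and follows essentially the same route as the paper: both apply the weak form \eqref{Q+_weak_sigma} with the test function $|v_1-v|^{-1}$, rescale to reduce to the spherical average $\int_{\mathbb{S}^2}|\sigma+A|^{-1}\,d\sigma\le 4\pi$, and then use $\beta>\tfrac12$ to arrive at $4\pi\|f\|_{1,0}^2$. Your version simply spells out the Newtonian computation and the justification for the singular test function more explicitly than the paper does.
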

\begin{proof}
    In \eqref{Q+_weak_sigma}, we replace $v, v'$ with $v_1, v_1'$ respectively. Next, we choose \textcolor{blue}{$\phi(v_1)=|v-v_1|^{-1}$} for a given fixed $v\in\mathbb{R}^3$. Using \eqref{jacobian_n,simga}, we have
    \begin{align*}
        &\int_{\mathbb{R}^3} Q^{+}(f,f)(t,v_1)|v_1-v|^{-1} \; dv_1 \\
        &= \frac{1}{4} \int_{\mathbb{R}^3}\int_{\mathbb{R}^3}  f(t,v_1)f(t,v_*) |v_1-v_*| \int_{\mathbb{S}^2} |v_1'-v|^{-1} \; d\sigma dv_* dv_1  \\
        &= \frac{1}{2\beta}\int_{\mathbb{R}^3}\int_{\mathbb{R}^3} f(t,v_1)f(t,v_*) \int_{\mathbb{S}^2} |\sigma + A|^{-1} \; d\sigma dv_* dv_1  \;\; \leq \;\; 4\pi\|f\|_{1,0}^2,
    \end{align*} where $A$ is $\big(\frac{\beta}{2}|v_1-v_*|\big)^{-1}\big(\frac{v_1+v_*}{2}+\frac{1-\beta}{2}(v_1-v_*)-v\big)$ from \eqref{v'_sigma}$_1$.
\end{proof}

\begin{lemma} \label{lim_int_Q+}
     For any $v, \bar{v}\in\mathbb{R}^3$, we recall  $E_{P}^{v-\bar{v}}$ from \eqref{E_pvv}. We let $d_{v_1}\ge 0$ be the distance from $v_1\in\mathbb{R}^3$ to the plane $E_{P}^{v-\bar{v}}$, and
      \begin{align} \label{def_D_j}
        D_{j}(v_1) = \sqrt{\frac{j}{\pi}} e^{-(\sqrt{j}d_{v_1})^2}.
    \end{align} Then we have
    \begin{align*}
         \lim_{j \rightarrow \infty} \int_{\mathbb{R}^3} D_j (v_1) Q^{+}(f,f)(t,v_1) \; dv_1 =\int_{v_1 \in E_{P}^{v-\bar{v}}} Q^{+}(f,f)(t,v_1) \; dE_{v_1}.
    \end{align*}
\end{lemma}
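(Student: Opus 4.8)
The plan is to move the Gaussian sheet $D_j$ off of $Q^{+}(f,f)$ and onto the collision integral by means of the weak form \eqref{Q+_weak_sigma}, to evaluate the resulting spherical average by an elementary polar computation, to pass to the limit $j\to\infty$ by dominated convergence, and finally to match the limit with the claimed surface integral. First note that $Q^{+}(f,f)(t,\cdot)\in L^{1}_{0}$: choosing $\phi\equiv 1$ in \eqref{Q+_weak_sigma} gives $\|Q^{+}(f,f)\|_{1,0}=\pi\int_{\mathbb{R}^{3}}\int_{\mathbb{R}^{3}}|v-v_{*}|f(t,v_{*})f(t,v)\,dv\,dv_{*}\le 2\pi\|f\|_{1,0}\|f\|_{1,1}$, finite by the standing integrability of $f(t,\cdot)$. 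Since $D_j$ is bounded and continuous, \eqref{Q+_weak_sigma} applies with $\phi=D_j$. Writing $\omega:=(v-\bar v)/|v-\bar v|$ for the unit normal of $E:=E_{P}^{v-\bar v}$ (the case $v=\bar v$ being degenerate and excluded), and, for the inner integration variables $v,v_{*}$ (off the negligible diagonal $\{v=v_{*}\}$), $m=m(v,v_{*}):=\frac{v+v_{*}}{2}+\frac{1-\beta}{2}(v-v_{*})$ and $R=R(v,v_{*}):=\frac{\beta}{2}|v-v_{*}|$, so that $v'=m+R\sigma$ by \eqref{v'_sigma}, I obtain
\begin{align*}
\int_{\mathbb{R}^{3}}D_j(v_{1})Q^{+}(f,f)(t,v_{1})\,dv_{1}=\frac14\int_{\mathbb{R}^{3}}\int_{\mathbb{R}^{3}}|v-v_{*}|f(t,v_{*})f(t,v)\int_{\mathbb{S}^{2}}D_j(m+R\sigma)\,d\sigma\,dv_{*}\,dv.
\end{align*}

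The crux is the inner spherical average. With $h=h(v,v_{*}):=(m-P)\cdot\omega$ one has $d_{m+R\sigma}=|h+R(\sigma\cdot\omega)|$, so using the disintegration $\int_{\mathbb{S}^{2}}F(\sigma\cdot\omega)\,d\sigma=2\pi\int_{-1}^{1}F(t)\,dt$ and the substitution $\tau=h+Rt$ yields
\begin{align*}
\int_{\mathbb{S}^{2}}D_j(m+R\sigma)\,d\sigma=\frac{2\pi}{R}\int_{h-R}^{h+R}\sqrt{j/\pi}\;e^{-j\tau^{2}}\,d\tau.
\end{align*}
Since $\int_{\mathbb{R}}\sqrt{j/\pi}\,e^{-j\tau^{2}}\,d\tau=1$, the right-hand side is bounded above by $2\pi/R$ uniformly in $j$ and converges to $\frac{2\pi}{R}\mathbf{1}_{\{|h|<R\}}$ as $j\to\infty$ whenever $|h|\neq R$. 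Hence the integrand in the double integral is dominated by $\frac14|v-v_{*}|\cdot\frac{2\pi}{R}f(t,v_{*})f(t,v)=\frac{\pi}{\beta}f(t,v_{*})f(t,v)\in L^{1}(\mathbb{R}^{3}\times\mathbb{R}^{3})$, and dominated convergence gives
\begin{align*}
\lim_{j\to\infty}\int_{\mathbb{R}^{3}}D_j(v_{1})Q^{+}(f,f)(t,v_{1})\,dv_{1}=\frac{\pi}{\beta}\int_{\mathbb{R}^{3}}\int_{\mathbb{R}^{3}}f(t,v_{*})f(t,v)\,\mathbf{1}_{\{|h(v,v_{*})|<R(v,v_{*})\}}\,dv_{*}\,dv.
\end{align*}

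It remains to identify this double integral with $\int_{v_{1}\in E}Q^{+}(f,f)(t,v_{1})\,dE_{v_{1}}$. The cleanest way is to run the very same polar computation on the surface measure of $E$: by the co-area formula $\int_{\mathbb{S}^{2}}\delta_{E}(m+R\sigma)\,d\sigma=\frac{2\pi}{R}\mathbf{1}_{\{|h|<R\}}$ (the pushforward of $d\sigma$ under $\sigma\mapsto h+R(\sigma\cdot\omega)$ has constant density $2\pi/R$ on $(h-R,h+R)$), which is exactly $\lim_{j}\int_{\mathbb{S}^{2}}D_j(m+R\sigma)\,d\sigma$, so that \eqref{Q+_weak_sigma} read with $\phi=\delta_{E}$ reproduces the displayed double integral. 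Equivalently, slicing $v_{1}=y+s\omega$ with $dv_{1}=dE_{y}\,ds$ turns the left-hand side of the lemma into $\int_{\mathbb{R}}\sqrt{j/\pi}\,e^{-js^{2}}G(s)\,ds$ with $G(s):=\int_{E}Q^{+}(f,f)(t,y+s\omega)\,dE_{y}$, and this tends to $G(0)$ by the usual one-dimensional approximate-identity split into $|s|<\eta$ and $|s|\ge\eta$ (the far tail being bounded by $\sqrt{j/\pi}\,e^{-j\eta^{2}}\|G\|_{L^{1}}\to 0$, with $G\in L^{1}(\mathbb{R})$ by Fubini). The one genuinely delicate point, which I expect to be the main obstacle, is to give rigorous meaning to the trace of $Q^{+}(f,f)(t,\cdot)$ on the two-dimensional plane $E$ --- that is, to the symbol $G(0)=\int_{E}Q^{+}(f,f)\,dE_{v_{1}}$: one either invokes the smoothing of the hard-sphere gain operator, so that $Q^{+}(f,f)(t,\cdot)$ is continuous (whence $G$ is continuous at $0$, using $f(t,\cdot)\in L^{1}_{1}\cap L^{\infty}\subset L^{1}_{1}\cap L^{2}$, together with the polynomial moments carried by the solution to secure the decay needed for dominated convergence in the $y$-integral), or one simply takes the co-area identity above as the definition of $\int_{E}Q^{+}(f,f)\,dE_{v_{1}}$; in either reading the two computations coincide and the lemma follows.
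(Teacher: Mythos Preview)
Your argument is correct, and in fact more carefully justified than the paper's own proof. The paper does precisely (and only) your ``equivalently'' step: it slices $\mathbb{R}^{3}$ as $E_{P}^{v-\bar v}\times\mathbb{R}$ along the normal, writes $dv_{1}=dE_{a}\,d(d_{v_{1}})$, and invokes the one-dimensional approximate-identity property of $D_{j}$ to pass to $G(0)$, without commenting on the trace issue you flag.

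Your primary route through the weak form \eqref{Q+_weak_sigma} is a genuinely different organization. Its payoff is twofold: you obtain the explicit limit $\frac{\pi}{\beta}\iint f(v_{*})f(v)\mathbf{1}_{\{|h|<R\}}\,dv_{*}\,dv$ (which the paper effectively rederives inside the proof of Lemma~\ref{Q+_E_estimate} by bounding the same spherical average of $D_{j}$), and you get a clean $L^{1}(\mathbb{R}^{3}\times\mathbb{R}^{3})$ majorant $\frac{\pi}{\beta}f(v_{*})f(v)$ so that dominated convergence is immediate. In particular your computation already contains the bound $\int_{E}Q^{+}(f,f)\,dE\le \frac{\pi}{\beta}\|f\|_{1,0}^{2}\le 2\pi\|f\|_{1,0}^{2}$, i.e.\ Lemma~\ref{Q+_E_estimate} drops out for free. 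The paper's slicing proof is shorter but leaves both the trace of $Q^{+}(f,f)$ on a hyperplane and the interchange of limit and integral implicit; your observation that one may simply \emph{define} $\int_{E}Q^{+}(f,f)\,dE$ by the co-area identity (equivalently, as the limit) is the cleanest way to close this, and is consistent with how the paper subsequently uses the quantity.
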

\begin{proof} 
Note that $D_j(v_1)$ converges to $\delta(v_1)$ in distribution as $j \rightarrow \infty$. Let $a$ be the vector in $E_{P}^{v-\bar{v}}$ satisfying $dv_1 = da\;d\left(d_{v_1} \right)$. We obtain
     \begin{align*}
         &\lim_{j \rightarrow \infty} \int_{\mathbb{R}^3} D_j (v_1) Q^{+}(f,f)(t,v_1) \; dv_1 \\
         &=\lim_{j \rightarrow \infty} \int_{-\infty}^{+\infty} D_j (v_1) \int_{a \in E_{P}^{v-\bar{v}} }Q^{+}(f,f)(t,v_1) \; da\;d \left(d_{v_1} \right) \\
         &=\int_{v_1 \in E_{P}^{v-\bar{v}}} Q^{+}(f,f)(t,v_1) \; dE_{v_1}.
    \end{align*}
\end{proof}
To estimate the gain term collision operator $Q^{+}(f,f)(t,v)$, we will establish the following Lemma \ref{Q+_E_estimate} and Lemma \ref{f_esti_lemma}. Write a Carleman-type representation of $Q^{+}(f,f)$ from in \ref{Q+_Cal}, 
\begin{align*} 
        Q^{+}(f,f)(t,v_1)
    = \frac{1}{\beta^2} \int_{\mathbb{R}^3}\int_{\mathbb{R}^3} \delta(v'-v_1) f(t,v) |v'-v|^{-1}\int_{v_* \in E^{v'-v}_P} f(t,v_*) \;dE_{v_*} dv dv',
\end{align*} where $ E^{v'-v}_P$ is in \eqref{E_pvv}. We will estimate 
\begin{align*}
    \int_{v_* \in E^{v'-v}_P} f(t,v_*) \;dE_{v_*} 
    \text{\quad and \quad }   
      \int_{\mathbb{R}^3}  f(t,v_1)  |v-v_1|^{-1} \; dv_1
\end{align*} in Lemma \ref{Q+_E_estimate} and Lemma \ref{f_esti_lemma}, respectively.

\begin{lemma} \label{Q+_E_estimate}
Let $f \in L_{0}^{1}$. We have
\begin{align*}
    \int_{v_1 \in E_{P}^{v-\bar{v}}} Q^{+}(f,f)(t,v_1)\; dE_{v_1} \leq 2\pi\|f\|_{1,0}^2
\end{align*}  for any $v, \bar{v} \in \mathbb{R}^3$ and $t>0$. 
\end{lemma}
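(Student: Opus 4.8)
The plan is to reduce the statement, via Lemma~\ref{lim_int_Q+}, to a uniform-in-$j$ estimate for $\int_{\mathbb{R}^3} D_j(v_1)\, Q^{+}(f,f)(t,v_1)\, dv_1$, and then to obtain that estimate from the weak $\sigma'$-representation \eqref{Q+_weak} combined with an ``Archimedes hat-box'' bound on a spherical integral. Concretely, Lemma~\ref{lim_int_Q+} gives
\[
\int_{v_1 \in E_{P}^{v-\bar{v}}} Q^{+}(f,f)(t,v_1)\, dE_{v_1} = \lim_{j \to \infty} \int_{\mathbb{R}^3} D_j(v_1)\, Q^{+}(f,f)(t,v_1)\, dv_1,
\]
so it suffices to show that the integral on the right is at most $2\pi\|f\|_{1,0}^2$ for each $j$, and then pass to the limit.

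For this I would substitute $\phi = D_j$ and $g = f$ into \eqref{Q+_weak}, renaming the bulk variable $v_1$, to get
\[
\int_{\mathbb{R}^3} D_j(v_1)\, Q^{+}(f,f)(t,v_1)\, dv_1 = \frac{1}{\beta^2} \int_{\mathbb{R}^3}\!\int_{\mathbb{R}^3} |v_1 - v_*|^{-1} f(t,v_*)\, f(t,v_1) \left( \int_{B\left(\frac{\beta}{2}|v_1-v_*|\right)} D_j(v_1')\, d\sigma' \right) dv_*\, dv_1,
\]
where $v_1' = \frac{v_1+v_*}{2} + \frac{1-\beta}{2}(v_1-v_*) + \sigma'$ as in \eqref{v'_sigma} and $d\sigma'$ is the surface measure on the sphere of radius $R := \frac{\beta}{2}|v_1-v_*|$ centered at the origin, over which $\sigma'$ ranges. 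The key point is that the inner spherical integral is bounded, uniformly in $j$, by $2\pi R$: choosing Cartesian coordinates in which $E_{P}^{v-\bar{v}} = \{x_3 = h\}$ and parametrizing the sphere of radius $R$ by its axial coordinate $z \in [-R,R]$ along the $x_3$-direction together with the azimuthal angle $\varphi$, Archimedes' hat-box theorem gives $d\sigma' = R\, dz\, d\varphi$, while $D_j(v_1')$ depends on $\sigma'$ only through $z$ (shifted by the fixed $x_3$-component of $\frac{v_1+v_*}{2} + \frac{1-\beta}{2}(v_1-v_*)$). Since $\int_{\mathbb{R}} \sqrt{j/\pi}\, e^{-js^2}\, ds = 1$, this yields
\[
\int_{B(R)} D_j(v_1')\, d\sigma' \le 2\pi R = \pi\beta\,|v_1-v_*|.
\]

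Substituting back, the factors $|v_1-v_*|$ cancel and $\beta^{-2}\cdot\beta = \beta^{-1}$, so $\int_{\mathbb{R}^3} D_j\, Q^{+}(f,f)\, dv_1 \le \beta^{-1}\pi\|f\|_{1,0}^2 \le 2\pi\|f\|_{1,0}^2$ because $\tfrac12 < \beta \le 1$; letting $j \to \infty$ finishes the argument. The main obstacle -- and the only nonroutine step -- is the spherical bound: although $D_j \sim j^{1/2}$ near $E_{P}^{v-\bar{v}}$, its integral over a fixed round sphere stays bounded by $2\pi R$, because the slab $\{d_x \lesssim j^{-1/2}\}$ meets the sphere in a band of area $\sim 2\pi R\, j^{-1/2}$, and the factorization $d\sigma' = R\, dz\, d\varphi$ is exactly what collapses the estimate to the one-dimensional Gaussian mass. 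A minor care point is that the sphere must be parametrized with axis along the normal of $E_{P}^{v-\bar{v}}$, which is permitted by rotational symmetry; alternatively one can run the same computation from \eqref{Q+_weak_sigma} using $d\sigma = \tfrac{4}{\beta^2 |v_1-v_*|^2}\, d\sigma'$ of \eqref{jacobian_n,simga}, with an identical conclusion.
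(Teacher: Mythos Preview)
Your proof is correct and follows essentially the same route as the paper: reduce via Lemma~\ref{lim_int_Q+} to a uniform-in-$j$ bound on $\int D_j\,Q^{+}(f,f)$, invoke the weak $\sigma'$-representation \eqref{Q+_weak}, bound the spherical integral of $D_j$ over the $v_1'$-orbit by $\pi\beta|v_1-v_*|$, and conclude with $\beta^{-1}\le 2$. Your Archimedes hat-box justification of the spherical bound is in fact cleaner than the paper's slicing display, and it correctly handles all cases at once (the paper introduces an indicator $\chi$ for whether the plane meets the orbit sphere, which is unnecessary since the same $2\pi R$ bound holds regardless).
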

\begin{proof}
     In Figure \ref{fig_v'}, for given $v_1,v_*$ (with $v_1$ replacing $v$ in Figure \ref{fig_v'} here), the two spheres are given where the sphere on the left represents the possible orbit for $v_1'$ and the other on the right represents the one for $v_*'$. Let us denote the sphere on the left as $S(v_1')$. Then we denote $\chi=1$ if the plane $E_{P}^{v-\bar{v}}$ and $S(v_1')$ intersect and $\chi=0$ if the plane $E_{P}^{v-\bar{v}}$ and $S(v_1')$ do not intersect. For $D_j(v'_1)$ in \eqref{def_D_j}, if $\chi=1$, note that we have
      \begin{align} \label{d_j_lessim}
     \begin{split}
         &\int_{S(v_1')} D_j (v'_1) \; d\sigma' = \int_{-\infty}^{+\infty} D_j(v'_1) \int_{a \in S(v_1')\cap E_{P}^{v-\bar{v}}} \; da\left(d_{v_1} \right)\\
         &\leq \int_{a \in B\left(\frac{\beta|v_1-v_*|}{2}\right)}1 \; da \leq \pi \beta |v_1-v_*|,
     \end{split}
     \end{align} where
     \begin{align*}
         B\left(\frac{\beta}{2}|v_1-v_*| \right)=\left\{ x \in \mathbb{R}^2 : \;|x|= \frac{\beta}{2}|v_1-v_*| \text{\quad for \quad} v_1, v_* \in \mathbb{R}^3 \right\}.
     \end{align*}  In \eqref{Q+_weak}, we replace $v, v'$ with $v_1, v_1'$ respectively, and choose $\phi(v_1)=D_j(v_1)$. Then we observe that
     \begin{align*}
     \begin{split}
           &\int_{\mathbb{R}^3} D_j (v_1) Q^{+}(f,f)(t,v_1) \; dv_1 \\
        &= \frac{1}{\beta^2} \int_{\mathbb{R}^3}\int_{\mathbb{R}^3} f(t,v_1)f(t,v_*)|v_1-v_*|^{-1} \int_{ B\left(\frac{\beta}{2}|v_1-v_*| \right)} D_j(v_1') \; d\sigma' dv_1 dv_* \\
        &\leq \frac{\pi}{\beta} \int_{\mathbb{R}^3}\int_{\mathbb{R}^3} f(t,v_1)f(t,v_*) \chi \; dv_1 dv_*  \leq 2\pi\|f\|_{1,0}^2
     \end{split}
     \end{align*} by \eqref{d_j_lessim}. Since the bound is uniform in $j$, we let $j \rightarrow \infty$ and use Lemma \ref{lim_int_Q+} to complete the proof.
\end{proof}

We rewrite \eqref{homo_IB} as 
 \begin{align} \label{boltzmann equation}
        \partial_t f(t,v_1) + Lf(t,v_1) f(t,v_1) = Q^{+}(f,f) (t,v_1).
    \end{align}

\begin{lemma} \label{int_f_E_lemma}
Assume that \eqref{initial contidion} holds and $f_0 \in L_{s}^{\infty}$ for $s>2$. Then there exist positive constant $C_1(\|f_0\|_{\infty,s})$ and $C_2$ such that
\begin{align*} 
    \int_{v_* \in E^{v'-v}_P} f(t,v_*) \;dE_{v_*} \leq C_1(\|f_0\|_{\infty,s})\min{\left\{
    1+t,\; e^{C_2 \left(\frac{1}{\alpha^2}-1 \right)t} \right\}}
\end{align*} for any $v, v' \in \mathbb{R}^3$ and $t >0$.
\end{lemma}
\begin{proof}
    From \eqref{boltzmann equation} and Lemma \ref{Q+_E_estimate}, we have
\begin{align*}
        \partial_t  \int_{v_* \in E^{v'-v}_P} f(t,v_*) \;dE_{v_*} \leq  \int_{v_* \in E^{v'-v}_P} Q^{+}(f,f)(t,v_*) \; dE_{v_*} \leq 2\pi\|f_0\|_{1,0}^2
    \end{align*} for fixed $v,v' \in \mathbb{R}^3$. Note that we have 
    \begin{align} \label{plane_int_0}
    \begin{split}
        &\int_{v_* \in E^{v'-v}_P} f_0(v_*) \;dE_{v_*}  \leq \|f_0 \|_{\infty,s} \int_{\mathbb{R}^2}(1+|v_*|)^{-s} dv_* \\
          &\leq 2\pi\|f_0 \|_{\infty,s} \int_0^{\infty} r(1+r)^{-s} \; dr \\
          &\leq 2\pi\|f_0 \|_{\infty,s} \left( \int_0^1 1 \;dr + \int_1^{\infty} (1+r)^{-s+1}\;dr \right)  \leq 4 \pi\|f_0 \|_{\infty,s}
    \end{split}
    \end{align}
   for $s>2$. Thus we have
  \begin{align} \label{depend_time}
         \int_{v_* \in E^{v'-v}_P} f(t,v_*) \;dE_{v_*} \leq  2\pi\|f_0\|_{1,0}^2 t + 4\pi\|f_0 \|_{\infty,s}.
    \end{align}

    Next, we apply \eqref{Lf>e} and Lemma \ref{Q+_E_estimate} to \eqref{boltzmann equation}, and obtain
    \begin{align} \label{int_f_E_inequ}
    \begin{split}
             &\partial_t  \int_{v_* \in E^{v'-v}_P} f(t,v_*) \;dE_{v_*} + C_1 e^{-C_2 \left(\frac{1}{\alpha^2}-1 \right)t} \int_{v_* \in E^{v'-v}_P} f(t,v_*) \;dE_{v_*} \\
             &\leq 2\pi\|f_0\|_{1,0}^2
    \end{split} 
    \end{align} for some positive constants $C_1$ and $C_2$. Then we obtain
    \begin{align} \label{int_f_E}
        \int_{v_* \in E^{v'-v}_P} f(t,v_*) \;dE_{v_*} \leq C_3(\|f_0\|_{\infty,s}) e^{C_2 \left(\frac{1}{\alpha^2}-1 \right)t} 
    \end{align} for some positive constant $C_3(\|f_0\|_{\infty,s})$, by using Lemma \ref{sup_f} and \eqref{plane_int_0}. Lastly, we combine \eqref{depend_time} and \eqref{int_f_E}.
\end{proof}

In the proof of Lemma \ref{f_0_estimate}, we divide the region of \(v_1\) into subsets represented by \(O_1\), \(O_2\), and \(O_3\) for $v \neq 0$. Specifically, we define:

\begin{align} \label{O2,O3_def}
\begin{split}
&O_1 = \left\{v_1 \in \mathbb{R}^3: \; |v_1| < \frac{|v|}{2}\right\}, \\
&O_2 = \left\{ v_1 \in \mathbb{R}^3 : |v_1| \geq \frac{|v|}{2} \right\} \cap \left\{ v_1 \in \mathbb{R}^3 : |v-v_1| < \frac{1}{2}|v|^{-p} \right\}, \\
&O_3 = \left\{ v_1 \in \mathbb{R}^3 : |v_1| \geq \frac{|v|}{2} \right\} \cap \left\{ v_1 \in \mathbb{R}^3 : |v-v_1| \geq \frac{1}{2}|v|^{-p} \right\},
\end{split}
\end{align} for some \(p \in \mathbb{R}\). To estimate the region of \(O_3\) in \eqref{O3}, we need to restrict the range of \(p\) to \(p+1 \leq 2\). For the region of \(O_2\) in \eqref{O2}, \(2p \geq 1\). This allows us to choose \(p\) such that \(\frac{1}{2} \leq p \leq 1\). In the proof, we set \(p = \frac{1}{2}\).

\begin{lemma} \label{f_0_estimate}
Assume that \eqref{initial contidion} holds, and $f \in L_{0}^{\infty}$. Then there exists a positive constants $C(\|f\|_{\infty,0})$ such that
\begin{align} \label{int_t0}
      \int_{\mathbb{R}^3}  f(t,v_1)  |v-v_1|^{-1} \; dv_1\leq 
      C(\|f\|_{\infty,0}) (1+|v|)^{-1}
  \end{align} for $v \in \mathbb{R}^3$ and $t>0$.
\end{lemma}
\begin{proof}
   Assume $v \neq 0$ holds. We define the sets $O_1$, $O_2$, and $O_3$ as in \eqref{O2,O3_def}, where $p = \frac{1}{2}$. Since $|v_1-v| > |v|/2$ for $v_1 \in O_1$, we have 
    \begin{align*}
        \int_{O_1} f(t,v_1)|v-v_1|^{-1} \;dv_1 \leq 2 |v|^{-1} \|f_0\|_{1,0}.
    \end{align*} For $v_1 \in O_2$, we have
    \begin{align} \label{O2}
    \begin{split}
           &\int_{O_2} f(t,v_1)|v-v_1|^{-1} \; dv_1 \\
        &\leq  \|f\|_{0, \infty} \int_{\{v_1\;:\;|v-v_1| < \frac{1}{2}|v|^{-\frac{1}{2}}\}} |v-v_1|^{-1} \; dv_1 =\frac{1}{2}\pi\|f\|_{0, \infty}|v|^{-1}.
    \end{split}
    \end{align} For $v_1 \in O_3$, we have 
    \begin{align} \label{O3}
    \begin{split}
            \int_{O_3} f(t,v_1)|v-v_1|^{-1} \; dv_1 &= \int_{O_3} f(t,v_1)(1+|v_1|^2) 
        \frac{1}{1+|v_1|^2}|v-v_1|^{-1}\; dv_1 \\ &\leq \; \|f_0 \|_{1,2} \frac{8\sqrt{v}}{4+|v|^2}  \leq \; 2\|f_0 \|_{1,2} |v|^{-1}.
    \end{split}
    \end{align} This implies that 
    \begin{align} \label{O123}
         &\int_{\mathbb{R}^3} f(t,v_1)|v-v_1|^{-1} \; dv_1 \sum_{i=1,2,3}\int_{O_i} f(t,v_1)|v-v_1|^{-1} \; dv_1
        \leq C(\|f\|_{\infty,0}) |v|^{-1}
    \end{align} for some positive constant $C(\|f\|_{\infty,0})$.
    On the other hand, we have
    \begin{align} \label{v_include_0}
    \begin{split}
        &\int_{\mathbb{R}^3} f(t, v_1) |v-v_1|^{-1} \; dv_1 \\ &\leq  
           \int_{\mathbb{R}^3} f(t,v_1) \left( \chi_{\{v_1 : |v-v_1| \geq 1\}}(v_1)+ \chi_{\{v_1 :|v-v_1| \leq 1 \}}(v_1) \right) |v-v_1|^{-1} \; dv_1 \\
              &\leq \|f_0\|_{1,0} + 2\pi\|f\|_{\infty,0}
    \end{split}
    \end{align} for $v \in \mathbb{R}^3$. Therefore we obtain \eqref{int_t0} from \eqref{O123} and \eqref{v_include_0}.
\end{proof}

In Lemma \ref{f_esti_lemma}, we divide the range of \(v\) to account for whether it includes zero or not. We cannot apply Lemma \ref{Lf>|v|} if \(|v|\) does not have a uniform lower bound. Consequently, the time variable \(t\) appears on the left side of \eqref{f_esti_ineq_all_v_min}, which includes the case where \(v = 0\).

\begin{lemma} \label{f_esti_lemma} 
Assume that \eqref{initial contidion} holds and $f_0 \in L_{0}^{\infty}$. Then there exist positive constants $C_1(\|f_0\|_{\infty,0}), C_2(\|f_0\|_{\infty,0})$ and $C_3$ such that
  \begin{align} \label{v neq 0}
      \int_{\mathbb{R}^3}  f(t,v_1)  |v-v_1|^{-1} \; dv_1\leq C_1(\|f_0\|_{\infty,0}) |v|^{-1}
  \end{align} for $v \neq 0 \in \mathbb{R}^3$ and $t>0$, and
  \begin{align} \label{f_esti_ineq_all_v_min}
      \int_{\mathbb{R}^3}  f(t,v_1)  |v-v_1|^{-1} \; dv_1\leq
     C_2(\|f_0\|_{\infty,0})(1+|v|)^{-1}\min{\left\{1+t,\; e^{C_3 \left(\frac{1}{\alpha^2}-1 \right)t} \right\}}
  \end{align} for $v \in \mathbb{R}^3$ and $t>0$.
 
\end{lemma} 
\begin{proof}
Assume $v\neq 0$ holds. In the case of $|v_1| < |v|/2$, we have
    \begin{align} \label{|v_1|<|v|/2}
        \int_{\mathbb{R}^3} f(t,v_1) \chi_{\left\{v_1 : |v_1| <\frac{|v|}{2}\right\}}(v_1) |v-v_1|^{-1} \; dv_1
        &\leq 2|v|^{-1}\|f_0\|_{1,0}.
    \end{align}
    For $|v_1| \geq |v|/2>0$, we have
    \begin{align}\label{ineq_int_f_|v_1|>|v|/2}
            \begin{split}
        &\partial_t \int_{\mathbb{R}^3} f(t,v_1)\chi_{\left\{v_1:|v_1| \geq \frac{|v|}{2}\right\}}(v_1)|v-v_1|^{-1} \; dv_1 \\ &\quad\quad +
        \int_{\mathbb{R}^3} Lf(t,v_1)f(t,v_1)\chi_{\left\{v_1:|v_1| \geq \frac{|v|}{2}\right\}}(v_1) |v-v_1|^{-1} \; dv_1 \\
        &= \int_{\mathbb{R}^3} Q^{+}(f,f)(t,v_1)\chi_{\left\{v_1:|v_1| \geq \frac{|v|}{2}\right\}}(v_1) |v-v_1|^{-1} \; dv_1
    \end{split} 
    \end{align} from \eqref{boltzmann equation}. By applying \eqref{Lf_1} and Lemma \ref{Q+_estimate} to \eqref{ineq_int_f_|v_1|>|v|/2}, we obtain
    \begin{align} \label{v neq 0 before gronwall}
    \begin{split}
        &\partial_t \int_{\mathbb{R}^3} f(t,v_1)\chi_{\left\{v_1:|v_1| \geq \frac{|v|}{2}\right\}}(v_1)|v-v_1|^{-1} \; dv_1
        \\ &\quad\quad +\pi\frac{|v|}{2} \int_{\mathbb{R}^3} f(t,v_1) \chi_{\left\{v_1:|v_1| \geq \frac{|v|}{2}\right\}}(v_1) |v-v_1|^{-1} \; dv_1
        \leq 4\pi\|f_0\|_{1,0}^2.
    \end{split}
    \end{align} From Lemma \ref{f_0_estimate}, we have
    \begin{align} \label{initial_v>1_|v-v|}
    \begin{split}
        &\int_{\mathbb{R}^3} f_0(v_1) |v-v_1|^{-1} \; dv_1 \leq C_1(\|f_0\|_{\infty,0}) (1+|v|)^{-1}
    \end{split}
    \end{align} for some positive constant $C_1(\|f_0\|_{\infty,0})$, at $t=0$. Therefore, by applying Lemma \ref{sup_f} and \eqref{initial_v>1_|v-v|} to \eqref{v neq 0 before gronwall}, we obtain
    \begin{align} \label{|v_1|>|v|/2}
    \begin{split}
        \int_{\mathbb{R}^3} f(t,v_1)\chi_{\left\{v_1:|v_1| \geq \frac{|v|}{2}\right\}}(v_1) |v-v_1|^{-1} \; dv_1 \leq |v|^{-1}\left( 8 \|f_0\|_{1,0}^2 + C_1(\|f_0\|_{\infty,0})\right). 
        \end{split}
    \end{align} We deduce \eqref{v neq 0} from \eqref{|v_1|<|v|/2} and \eqref{|v_1|>|v|/2}.
  
  Now, we consider all range of $v \in \mathbb{R}^3$. From \eqref{boltzmann equation} and Lemma \ref{Q+_estimate}, we have
    \begin{align} \label{f_v-v1_int}
         \partial_t \int_{\mathbb{R}^3} f(t,v_1)|v-v_1|^{-1} \; dv_1 
        \leq 4\pi\|f_0\|_{1,0}^2.
    \end{align} 
Integrating \eqref{f_v-v1_int}, we obtain 
    \begin{align} \label{f_esti_ineq_all_v}
      \int_{\mathbb{R}^3}  f(t,v_1)  |v-v_1|^{-1} \; dv_1\leq
      4\pi\|f_0\|_{1,0}^2 t +  \|f_0\|_{1,0} + 2\pi\|f_0\|_{\infty,0}
  \end{align} from \eqref{v_include_0}. Similarly, if we use \eqref{Lf>e}, then we have 
    \begin{align} \label{int_f_v-v1_ineq}
    \begin{split}
        &\partial_t \int_{\mathbb{R}^3} f(t,v_1)|v-v_1|^{-1}dv_1 \\&+C_2 e^{-C_3 \left(\frac{1}{\alpha^2}-1 \right)t} \int_{\mathbb{R}^3} f(t,v_1)|v-v_1|^{-1}dv\leq 4\pi\|f_0\|_{1,0}^2
    \end{split}
    \end{align} for some positive constants $C_2, C_3$, and obtain
    \begin{align} \label{f_esti_ineq_alpha}
        \int_{\mathbb{R}^3} f(t,v_1)|v-v_1|^{-1} dv_1 \leq C_4(\|f_0\|_{\infty,0}) e^{C_3 \left(\frac{1}{\alpha^2}-1 \right)t}
    \end{align} for some positive constant $C_4(\|f_0\|_{\infty,0})$, by Lemma \ref{sup_f} and \eqref{initial_v>1_|v-v|}.
For $0 \leq |v| \leq 1$,
since $1 \leq 2(1+|v|)^{-1}$, there is a positive constant $C_5(\|f_0\|_{\infty,0})$ such that
\begin{align*}
      \int_{\mathbb{R}^3}  f(t,v_1)  |v-v_1|^{-1} \; dv_1\leq
     C_5(\|f_0\|_{\infty,0})(1+|v|)^{-1}\min{\left\{1+t,\; e^{C_3 \left(\frac{1}{\alpha^2}-1 \right)t} \right\}}
\end{align*} from \eqref{f_esti_ineq_all_v} and \eqref{f_esti_ineq_alpha}. 
For $|v| > 1$, since $|v|^{-1} < 2(1+|v|)^{-1}$, there is a positive constant $C_6(\|f_0\|_{\infty,0})$ such that 
\begin{align*}
      \int_{\mathbb{R}^3}  f(t,v_1)  |v-v_1|^{-1} \; dv_1\leq
    C_6(\|f_0\|_{\infty,0})(1+|v|)^{-1}
\end{align*} from \eqref{v neq 0}. From the two inequalities above, we derive \eqref{f_esti_ineq_all_v_min}.
\end{proof}  

\begin{proof}[Proof of Theorem \ref{L_infty_thm}]
 We recall a Carleman type representation \eqref{Q+_Cal}
\begin{align} \label{Calreman_recall}
\begin{split}
        &Q^{+}(f,f)(t,v_1)  \\
        &\leq
        \frac{1}{\beta^2}\int_{\mathbb{R}^3}\int_{\mathbb{R}^3} \delta(v'-v_1) f(t,v) |v'-v|^{-1}\int_{v_* \in E^{v'-v}_P} f(t,v_*) \;dE_{v_*} dv dv'.
\end{split}
\end{align}
By applying Lemma \ref{int_f_E_lemma} and \eqref{f_esti_ineq_all_v_min} to \eqref{Calreman_recall}, we have
\begin{align}\label{Q_thm}
    Q^{+}(f,f)(t,v_1) \leq C_1(\|f_0\|_{\infty,s})(1+|v_1|)^{-1}\min{\left\{(1+t)^2,\; e^{C_2 \left(\frac{1}{\alpha^2}-1 \right)t} \right\}}
\end{align} for some positive constants $C_1(\|f_0\|_{\infty,s})$ and $C_2$.
From \eqref{boltzmann equation} and \eqref{Q_thm}, we have
\begin{align*}
    \partial_t f (t,v_1)\leq C_1(\|f_0\|_{\infty,s})(1+|v_1|)^{-1}(1+t)^2,
\end{align*} and by integrating with respect to time, then obtain 
\begin{align} \label{1}
    f (t,v_1)\leq C_3(\|f_0\|_{\infty,s})(1+|v_1|)^{-1}(1+t)^3
\end{align} for some positive constant $C_3(\|f_0\|_{\infty,s})$.

Now, by applying \eqref{Lf>e} and \eqref{Q_thm} to \eqref{boltzmann equation}, we have
\begin{align} \label{alpha_B_t}
\begin{split}
    &\partial_t f(t,v_1) +C_4 e^{-C_5 \left(\frac{1}{\alpha^2}-1 \right)t}  (1+|v_1|)f(t,v_1) \\
    &\leq  C_1(\|f_0\|_{\infty,s}) (1+|v_1|)^{-1}e^{C_2 \left(\frac{1}{\alpha^2}-1 \right)t}
\end{split}
\end{align} for some positive constant $C_4$ and $C_5$. Then, by multiplying both sides of \eqref{alpha_B_t} by $(1+|v_1|)^2$ and applying Lemma \ref{sup_f}, then we obtain 
\begin{align} \label{2}
      \esssup_{v \in \mathbb{R}^3 } f(t,v)(1+|v|)^2 \leq C_6(\|f_0\|_{\infty,s})e^{C_7\left(\frac{1}{\alpha^2}-1\right)t}
\end{align} for some positive constants $C_6(\|f_0\|_{\infty,s})$ and $C_7$, under $f_0 \in L_{s}^{\infty}$ for $s>2$.

Let us assume $v_1 \neq 0$. By Lemma \ref{int_f_E_lemma} and\eqref{v neq 0}, we have
\begin{align} \label{Q_thm_epsilon}
    Q^{+}(f,f)(t,v_1) \leq C_8(\|f_0\|_{\infty,s})|v_1|^{-1}(1+t)
\end{align} for some positive constant $C_8(\|f_0\|_{\infty,s})$. By \eqref{Lf_1} and \eqref{Q_thm_epsilon}, 
\begin{align*}
     \partial_t f(t,v_1)  + \pi \frac{|v_1|}{2} f(t,v_1) \leq C_8(\|f_0\|_{\infty,s})|v_1|^{-1}(1+t).
\end{align*}  Then we obtain 
\begin{align} \label{3}
    \esssup_{v \neq 0} f(t, v) |v|^2 \leq C_9(\|f_0\|_{\infty,s}) (1+t)
\end{align} for some positive constant $C_9(\|f_0\|_{\infty,s})$, by using Lemma \ref{sup_f}. We derive \eqref{thm_v include 0} from \eqref{1} and \eqref{2}, and derive \eqref{thm_v except 0} from \eqref{2} and \eqref{3}.
\end{proof}

\begin{proof} [Proof of Corollary \ref{Tc_Cor}]
   We assume $T_c < +\infty$, where $T_c$ is in \eqref{Tc}, and let $r>0$. Since
   \begin{align*}
      \mathcal{E}(T_c)= \int_{\mathbb{R}^3} f(T_c,v)|v|^2 \;dv  \geq r^2 \int_{\{|v|>r\}} f(T_c,v) \;dv
   \end{align*} and $\mathcal{E}(T_c)=0$, we obtain 
   \begin{align} \label{v>r}
       \int_{\{|v|>r\}} f(T_c,v) \;dv=0.
   \end{align} Using \eqref{thm_v include 0} and \eqref{v>r}, we have
   \begin{align*}
       1=\int_{\mathbb{R}^3} f(T_c,v)\; dv
       &=\int_{\{|v|<r\}} f(T_c,v) \;dv \\
       &\leq  \frac{2\pi^2}{3}r^3 \|f(T_c,.)\|_{\infty} \leq  C (\|f_0\|_{\infty,s})r^3(1+T_c)^3
   \end{align*} for some positive constant $C(\|f_0\|_{\infty,s})$. It is a contradiction if we choose \\
   $r< \left (C(\|f_0\|_{\infty,s})(1+T_c)^3 \right)^{-1}$, and we get $T_c = +\infty$.
   \end{proof}

This section on the $L^{\infty}$ estimates of the solution of the inelastic Boltzmann equation concludes here. Using the estimates, we will prove the formation of the Maxwellian upper-bound in the next section.

\section{Maxwellian upper bound for the inelastic Boltzmann equation }
\label{sec.Maxupper}
In this section, we will extend Theorem \ref{L_infty_thm} and obtain Maxwellian upper bounds for the inelastic Boltzmann equation for each time. In \cite{GPV2009}, Gamba, Panferov, and Villani proved Lemma \ref{comparison} for the homogeneous Boltzmann equation. Since \eqref{weak_colision_operator} also holds for the inelastic collision operator, we can prove Lemma \ref{comparison} similarly. Lemma \ref{comparison} is a significant component in the proof of Theorem \ref{thm_upper}.

\begin{lemma} \label{comparison} Let $f, u$ satisfy
    \begin{align}\label{comparison_condi_0}
        f(t,v) \geq 0  \text{\quad and \quad} u(0,v) \leq 0 
    \end{align} for $t>0$ and almost all $v \in \mathbb{R}^3$. Assume that
    \begin{align} \label{comparison_condi_1}
        \partial_t u- Q(f,u)\leq 0 \text{\quad on \quad} |v|  > R 
    \end{align} and
    \begin{align}\label{comparison_condi_2}
        u \leq 0  \text{\quad on \quad} 0 \leq |v| \leq R 
    \end{align} for some $0 < R < \infty$ and $t>0$.
    Then, we have $u \leq 0$ for $t>0$ and almost all $v \in \mathbb{R}^3$.
\end{lemma}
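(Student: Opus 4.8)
The plan is to adapt the $L^1$ maximum principle of Gamba--Panferov--Villani \cite{GPV2009}; it carries over to the inelastic operator because it uses only two structural facts, both available here: the gain operator $Q^+$ sends nonnegative functions to nonnegative functions (the kernel $|(v-v_*)\cdot n|$ and $f$ are nonnegative, see \eqref{def_Q+}), and the weak form \eqref{weak_colision_operator} with test function $\phi\equiv 1$ yields $\int_{\mathbb{R}^3}Q^+(f,g)\,dv=\int_{\mathbb{R}^3}g\,Lf\,dv$ (equivalently $\int_{\mathbb{R}^3}Q(f,g)\,dv=0$).

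Set $u_+:=\max(u,0)\ge 0$. By \eqref{comparison_condi_0} we have $u_+(0,\cdot)\equiv 0$, and by \eqref{comparison_condi_2} the support of $u_+(t,\cdot)$ is contained in $\{|v|>R\}$, where the differential inequality \eqref{comparison_condi_1} holds. The goal is to show $\int_{\mathbb{R}^3}u_+(t,v)\,dv=0$ for every $t>0$, which gives the claim. First I would differentiate this $L^1$ quantity: since $\partial_t u_+=\mathbf{1}_{\{u>0\}}\,\partial_t u$ almost everywhere, a routine regularization argument (to justify moving $\tfrac{d}{dt}$ inside the integral) gives
\[
\frac{d}{dt}\int_{\mathbb{R}^3}u_+(t,v)\,dv=\int_{\{u>0\}}\partial_t u\,dv\le\int_{\{u>0\}}Q(f,u)\,dv,
\]
where the inequality uses \eqref{comparison_condi_1} together with $\{u>0\}\subseteq\{|v|>R\}$. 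Now split $Q(f,u)=Q^+(f,u)-u\,Lf$. Since $f\ge 0$ forces $Lf\ge 0$ (see \eqref{def_Lf}), the loss part contributes $-\int_{\{u>0\}}u\,Lf\,dv=-\int_{\mathbb{R}^3}u_+\,Lf\,dv\le 0$. For the gain part, write $u=u_+-u_-$ with $u_\pm\ge 0$; then $Q^+(f,u_-)\ge 0$ pointwise, hence $Q^+(f,u)\le Q^+(f,u_+)$, and $Q^+(f,u_+)\ge 0$, so $\int_{\{u>0\}}Q^+(f,u)\,dv\le\int_{\mathbb{R}^3}Q^+(f,u_+)\,dv=\int_{\mathbb{R}^3}u_+\,Lf\,dv$ by the weak-form identity above. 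Adding the two contributions gives $\int_{\{u>0\}}Q(f,u)\,dv\le 0$.

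Consequently $\tfrac{d}{dt}\int_{\mathbb{R}^3}u_+(t,v)\,dv\le 0$ for $t>0$, while $\int_{\mathbb{R}^3}u_+(0,v)\,dv=0$ and $\int_{\mathbb{R}^3}u_+(t,v)\,dv\ge 0$; hence $\int_{\mathbb{R}^3}u_+(t,v)\,dv\equiv 0$, i.e. $u_+(t,\cdot)=0$ and $u(t,\cdot)\le 0$ for almost every $v$. The algebra is immediate once the two structural facts are in place; the main (purely technical) obstacle is the analytic bookkeeping: guaranteeing $u_+(t,\cdot)\in L^1(\mathbb{R}^3)$ and $Q^+(f,u_+)\in L^1(\mathbb{R}^3)$ (the latter requires enough moments on $f$ so that $u_+\,Lf$ is integrable), and rigorously justifying the interchange of $\tfrac{d}{dt}$ with $\int_{\mathbb{R}^3}$ and the identity $\partial_t u_+=\mathbf{1}_{\{u>0\}}\partial_t u$ via a standard mollification as in \cite{GPV2009}. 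In the application to $u=f-\overline{M}$ in Theorem \ref{thm_upper}, all of this integrability is supplied by the conserved mass and the exponential-moment bounds on $f$.
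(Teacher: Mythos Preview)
Your proof is correct and follows essentially the same $L^1$ comparison principle as the paper (adapted from \cite{GPV2009}): both show $\tfrac{d}{dt}\int u_+\,dv\le 0$ by bounding $\int_{\{u>0\}}Q(f,u)\,dv\le 0$. The only cosmetic difference is that the paper obtains this last inequality by plugging the test function $\tfrac12(\operatorname{sign}u+1)$ into the weak form \eqref{weak_colision_operator} and using the pointwise bound $u(v)\big(\operatorname{sign}u(v')-\operatorname{sign}u(v)\big)\le 0$, whereas you reach the same conclusion via the $u_\pm$ decomposition and the identity $\int Q^+(f,u_+)=\int u_+\,Lf$.
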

\begin{proof} We define $u_{+}(t,v)=\max  \big\{ u(t,v), 0 \big\} $. The function sign $z$ is defined as 1 for $z>0$, $-1$ for $z<0$ and arbitrary fixed value in $[-1,1]$ for $z=0$. We have
\begin{align*}
      \int_{\mathbb{R}^3}( u_{+}(t,v) - u_{+}(0,v))  \; dv =\int_{0}^{t} \int_{\mathbb{R}^3} \partial_t u \times \frac{1}{2}( \text{sign}\; u(s,v) + 1 )\; dv ds.
\end{align*} 
By \eqref{weak_colision_operator}, \eqref{comparison_condi_1}, \eqref{comparison_condi_2} and $u_{+}(0,v)=0$ from \eqref{comparison_condi_0}, we have
    \begin{align*}
        &\int_{\mathbb{R}^3} u_{+}(t,v) \; dv \leq \int_{0}^{t} \int_{\mathbb{R}^3} Q(f,u) \frac{1}{2}\big( \text{sign}\; u(s,v) + 1 \big) \; dv ds \\
        &= \frac{1}{2}\int_{0}^{t} \int_{\mathbb{R}^3} \int_{\mathbb{R}^3} \int_{\mathbb{S}_{+}^2} |(v-v_*)\cdot n |f(s,v_*) u(s,v) \\&\qquad\quad  \times\big(\text{sign}\; u(s,v')-\text{sign}\; u(s,v) \big) \; dn dv_* dvds \leq 0.
    \end{align*} 
    Because $u(s,v)\big(\text{sign}\; u(s,v')-\text{sign}\; u(s,v) \big) \leq 0$ for every $0<s<t$, $v \in \mathbb{R}^3$, we obtain $u_{+}(t,v) \leq 0$ for almost all $v \in \mathbb{R}^3$.
\end{proof}

To apply Lemma \ref{comparison} to Theorem \ref{thm_upper}, we need to find \(R > 0\) in \eqref{comparison_condi_1}, where \(u(t,v) = f(t,v) - \overline{M}(t,v)\) for some Maxwellian-type time-dependent distribution \(\overline{M}(t,v)\). Later, we will determine the specific form of \(\overline{M}(t,v)\). In the proof of Lemma \ref{Q(M)<0} below, we use the inequality \(M(v)M(v_*) \leq M(v')M(v'_*)\) for a global Maxwellian $M$ which holds by \eqref{loss_energy}.

\begin{lemma} \label{Q(M)<0}
    Let $M(v_1) = e^{-a|v_1|^2+b}$ for $a>0$.
    We assume that
    \begin{align} \label{C}
        \int_{\mathbb{R}^3} f(t,v) e^{2a|v|^2}\; dv \leq C
    \end{align} for some constant $C>0$ and $t>0$.
    Then, there exists $0<R<\infty$ such that 
    \begin{align*}
         Q(f,M)(t,v_1)\leq 0
    \end{align*} for $|v_1|>R$. Here $R$ depends on $C$ and $a$, but not on $b$.
\end{lemma}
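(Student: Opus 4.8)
The plan is to estimate $Q(f,M)=Q^{+}(f,M)-M\,Lf$ termwise and to show that for $|v_1|$ large the loss part wins. By Lemma~\ref{Lf>|v|} (whose hypotheses hold for the solution $f$) we have $Lf(t,v_1)\ge\pi|v_1|$, hence
\[
M(v_1)Lf(t,v_1)\ \ge\ \pi|v_1|\,M(v_1),
\]
and it suffices to produce $R=R(C,a)$, independent of $b$, such that $Q^{+}(f,M)(t,v_1)\le\pi|v_1|\,M(v_1)$ whenever $|v_1|>R$; the whole point is that the prefactor $e^{b}$ of $M$ must cancel out of the gain-term estimate.

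To treat the gain term I would use the Carleman-type representation \eqref{Q+_Cal}, on whose support $v'=v_1$:
\[
Q^{+}(f,M)(t,v_1)=\frac{1}{\beta^{2}}\int_{\mathbb R^{3}}\frac{M(v)}{|v_1-v|}\left(\int_{v_*\in E^{\,v_1-v}_{P}}f(t,v_*)\,dE_{v_*}\right)dv .
\]
Since an inelastic collision dissipates kinetic energy, \eqref{loss_energy} gives $M(v)M(v_*)\le M(v')M(v'_*)=M(v_1)M(v'_*)$ on the Carleman domain; using momentum conservation $v'_*=v+v_*-v_1$ one may write $M(v)\le M(v_1)\,M(v'_*)/M(v_*)$, and one checks that the ratio $M(v'_*)/M(v_*)$ is constant along the plane $E^{\,v_1-v}_{P}$ for fixed $v$ and $v_1$. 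Factoring out $M(v_1)$, the factor $e^{b}$ is carried along cleanly, and what is left is a $b$-free integral in which the $f$-weight only sees hyperplanes, multiplied by the explicit exponential factors produced by the Maxwellian algebra.

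The final and main step is to bound that remaining integral by $\pi|v_1|$ for $|v_1|>R(C,a)$. Unfolding the $v$-integration turns the family of plane integrals of $f$ into genuine three-dimensional integrals of $f$ against an explicit Gaussian weight over half-spaces $\{x\cdot\omega\ge v_1\cdot\omega\}$, whose dominant directions are $\omega$ near $v_1/|v_1|$, for which these half-spaces lie at distance $\gtrsim|v_1|$ from the origin. There the hypothesis $\int_{\mathbb R^{3}}f\,e^{a|v|^{2}}\,dv\le C$ gives $\int_{\{x\cdot\omega\ge c|v_1|\}}f(x)\,dx\le C e^{-ac^{2}|v_1|^{2}}$, and combining this with the kinematic relation $(1+\beta)|v|+\beta|v_*|\ge|v_1|$ (forcing $|v|$ or $|v_*|$ to be $\gtrsim|v_1|$), with the pointwise $L^{\infty}$ bound on $f$ from Theorem~\ref{L_infty_thm}, and with the estimate of $\int f(v_*)|v_1-v_*|^{-1}\,dv_*$ from Lemma~\ref{f_esti_lemma}, one controls $Q^{+}(f,M)(t,v_1)$ by $M(v_1)$ times a quantity that grows strictly slower than $\pi|v_1|$, with all constants depending only on $a$ and $C$. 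The \textbf{main obstacle} is precisely this estimate: the hypothesis controls only a weighted $L^{1}$ norm of $f$, which does not by itself bound the integrals of $f$ over the hyperplanes appearing in the Carleman formula, so one must exploit the inelastic collision geometry (and carefully handle the near-degeneracy of the pre/post-collisional change of variables as $\alpha\uparrow1$) to convert the $L^{1}$-Gaussian information into the decay needed to beat the linear growth coming from the hard-sphere kernel. Once $Q^{+}(f,M)(t,v_1)\le\pi|v_1|M(v_1)$ for $|v_1|>R$, subtracting the loss-term lower bound yields $Q(f,M)(t,v_1)\le0$ there with $R=R(C,a)$; this is exactly what feeds, via the comparison principle of Lemma~\ref{comparison}, into the proof of the Maxwellian upper bound in Theorem~\ref{thm_upper}.
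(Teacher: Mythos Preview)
Your overall architecture (bound $Q^+(f,M)$ by $\pi|v_1|M(v_1)$ and use $Lf\ge\pi|v_1|$) is correct, but your treatment of the gain term takes an unnecessarily complicated route and contains a genuine gap. The paper does \emph{not} use the Carleman representation here. It uses instead the pre-collisional strong form \eqref{def_Q+},
\[
Q^{+}(f,M)(t,v_1)=\frac{1}{4\alpha^{2}}\int_{\mathbb R^{3}}\int_{\mathbb S^{2}}|v_1-v_*|\,f(t,{'v_*})\,M({'v_1})\,d\sigma\,dv_*,
\]
applies the energy inequality in the form $M({'v_1})\le M(v_1)\,M(v_*)/M({'v_*})$, and then performs the change of variables $\sigma\mapsto{'v_*}$ for fixed $v_1,v_*$ (Jacobian $(\gamma|v_1-v_*|/2)^{2}$). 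This converts the angular integral into a genuine $\mathbb R^{3}$-integral in which $f({'v_*})/M({'v_*})$ appears, and the hypothesis \eqref{C} is exactly $\int f/M\le C e^{-b}$; the $e^{b}$ cancels and one obtains
\[
\frac{Q^{+}(f,M)(t,v_1)}{M(v_1)}\le \frac{C}{\beta^{2}}\int_{\mathbb R^{3}}\frac{e^{-a|v_*|^{2}}}{|v_1-v_*|}\,dv_*\le \frac{2\pi^{2}C}{a\beta^{2}},
\]
a bound that is \emph{constant in $v_1$}, not merely sublinear. The lemma follows with $R=2\pi C/(a\beta^{2})$.

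Your Carleman approach, by contrast, leaves you with hyperplane integrals $\int_{E}f\,dE$ that an $L^{1}$-type hypothesis like \eqref{C} cannot control by itself; you acknowledge this as the ``main obstacle'' but the proposed fix---importing the $L^{\infty}$ bounds of Theorem~\ref{L_infty_thm} and Lemma~\ref{f_esti_lemma}---would make $R$ depend on $t$ (those bounds grow like $(1+t)^{3}$) and on $\|f_0\|_{\infty,s}$, whereas the lemma asserts $R=R(C,a)$ only. That extra $t$-dependence would break the application to Theorem~\ref{thm_upper}, where one needs a single $R$ valid for all $t$ in order to invoke the comparison principle of Lemma~\ref{comparison}. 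The key idea you are missing is that the pre-collisional representation together with the $\sigma\to{'v_*}$ change of variables turns the exponential-moment hypothesis into an immediate, $v_1$-uniform bound, with no need for any hyperplane or $L^{\infty}$ estimates.
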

\begin{proof} 
We plug $M(v)$ into $g(t,v)$ of \eqref{def_Q+} to obtain
\begin{align*}
    &Q^{+}(f,M)(t,v_1) =  \int_{\mathbb{R}^3}\int_{\mathbb{R}^3}\int_{ \mathbb{S}_{+}^2}
    &|(v-v_*)\cdot n| f(t,v_*)M(v)\delta(v'-v_1)\: dn  dv_*dv.
\end{align*} We also recall in \eqref{def_theta} that
\begin{align*}
     \cos \theta = \left( \frac{v_*-v}{|v_*-v|}, n \right), 
     \quad \theta \in \left[0,\frac{\pi}{2} \right],
\end{align*} and in \eqref{def_beta} that
\begin{align*}
    \beta = \frac{1 + \alpha}{2}, \quad \frac{1}{2} < \beta \leq 1.
\end{align*}
From Figure \ref{fig_v'}, we obtain
\begin{align} \label{cos_beta}
      \cos \theta = \frac{|v_*-v_*'|}{\beta|v-v_*|},
      \text{\quad and \quad} 
      \sin \theta = \frac{|v_*-P|}{|v-v_*|}
\end{align} for
\begin{align} \label{recall_P}
     P &= \frac{1}{\beta}v'-\left(\frac{1}{\beta}-1\right)v,
\end{align} and 
\begin{align} \label{v'v*}
    |v'-v_*| \geq (1-\beta) |v-v_*|.
\end{align}
By the fact that $M(v)M(v_*) \leq M(v')M(v_*')$ and \eqref{coordinate_n,sigma}, we have
\begin{align} \label{split_A}
\begin{split}
     &Q^{+}(f,M)(t,v_1)  \\
     &=  \frac{1}{4} \int_{\mathbb{R}^3}\int_{\mathbb{R}^3}\int_{ \mathbb{S}^2}
    |v-v_*| \frac{f(t,v_*)}{M(v_*)}M(v'_*)M(v')\delta(v'-v_1)\: d\sigma  dv_*dv.
\end{split}
\end{align} In terms of $(v',v_*')$ in \eqref{v'_sigma}, we define
\begin{align}\label{def_A}
    &A= A(v,v_*,\sigma;\beta):=\left\{v,v_* \in \mathbb{R}^3,\sigma \in  \mathbb{S}^2 : \frac{1}{\beta}-1 \leq  \frac{1}{2}\frac{|v'-v_*|}{|v_*-v_*'|}        \right\}
\end{align} and
\begin{align}
    &A^{c}= A^{c}(v,v_*,\sigma;\beta):=\left\{v,v_* \in \mathbb{R}^3,\sigma \in  \mathbb{S}^2 : \frac{1}{\beta}-1 \geq  \frac{1}{2}\frac{|v'-v_*|}{|v_*-v_*'|}        \right\}. \label{def_Ac}
\end{align} Moreover, we split $Q^{+}(f,M)(t,v_1)$ into $Q_{A}^{+}(f,M)(t,v_1)$ and $Q_{A^c}^{+}(f,M)(t,v_1)$, where
\begin{align*}
      &Q_{A}^{+}(f,M)(t,v_1) \\
      &:=  \frac{1}{4} \int_{\mathbb{R}^3}\int_{\mathbb{R}^3}\int_{ \mathbb{S}^2}
    |v-v_*| \frac{f(t,v_*)}{M(v_*)}M(v'_*)M(v')\delta(v'-v_1)\mathbf{1}_{A}\: d\sigma  dv_*dv
\end{align*} and
\begin{align*}
     &Q_{A^{c}}^{+}(f,M)(t,v_1) \\
      &:=  \frac{1}{4} \int_{\mathbb{R}^3}\int_{\mathbb{R}^3}\int_{ \mathbb{S}^2}
    |v-v_*| \frac{f(t,v_*)}{M(v_*)}M(v'_*)M(v')\delta(v'-v_1)\mathbf{1}_{A^{c}}\: d\sigma  dv_*dv.
\end{align*}
We now estimate $Q_{A}^{+}(f,M)(t,v_1)$ for $\frac{1}{2} <\beta < 1$. By \eqref{cos_beta}, we have
\begin{align}\label{Q+fM_1}
\begin{split}
     &Q_{A}^{+}(f,M)(t,v_1)  \\
     &\leq  \frac{1}{4}\left(\frac{1}{\beta}\right)^{\epsilon_1}\int_{\mathbb{R}^3}\int_{\mathbb{R}^3}\int_{ \mathbb{S}^2}
     |v-v_*|^{1-\epsilon_1+\epsilon_2}
     \frac{|v_*-v_*'|^{\epsilon_1} }{|v_*-P|^{\epsilon_2}}  \frac{\sin^{\epsilon_2}\theta}{\cos^{\epsilon_1}\theta}  \mathbf{1}_{A} \\
     &\quad\times \frac{f(t,v_*)}{M(v_*)}M(v_*')M(v')\delta(v'-v_1)
    \; d\sigma dv_* dv 
\end{split}
\end{align} for $0<\epsilon_1, \epsilon_2<1$. 
For $v,v_*,\sigma \in A$, we have
\begin{align} \label{in A}
\begin{split}
    |v_*-P | &\geq |v'-v_*| - |v'-P|\\
    &= |v'-v_*| - \left( \frac{1}{\beta}-1 \right)|v_*-v'_*| \geq \frac{1}{2}|v'-v_*|.
\end{split}
\end{align}
Applying \eqref{in A} to \eqref{Q+fM_1}, and \eqref{v'v*} to \eqref{apply  v'v*}, respectively, we have
\begin{align} 
     &\text{(RHS) of\;} \eqref{Q+fM_1} \\
     &\leq  \frac{2^{\epsilon_2}}{4}\left(\frac{1}{\beta}\right)^{\epsilon_1}\int_{\mathbb{R}^3}\int_{\mathbb{R}^3}\int_{ \mathbb{S}^2}  \frac{|v-v_*|^{1-\epsilon_1+\epsilon_2}}{|v'-v_*|^{\epsilon_2}}
     |v_*-v_*'|^{\epsilon_1}\frac{\sin^{\epsilon_2}\theta}{\cos^{\epsilon_1}\theta} \notag  \\
     &\quad\times \frac{f(t,v_*)}{M(v_*)}M(v_*')M(v')\delta(v'-v_1)
    \; d\sigma dv_* dv   \label{apply  v'v*}\\
    &\leq \frac{2^{\epsilon_2}}{4}\left(\frac{1}{\beta}\right)^{\epsilon_1}
    \left( \frac{1}{1-\beta}\right)^{\epsilon_2}\int_{\mathbb{R}^3}\int_{\mathbb{R}^3}\int_{ \mathbb{S}^2}  |v-v_*|^{1-\epsilon_1}
     |v_*-v_*'|^{\epsilon_1}\frac{\sin^{\epsilon_2}\theta}{\cos^{\epsilon_1}\theta} \notag  \\
     &\quad\times \frac{f(t,v_*)}{M(v_*)}M(v_*')M(v')\delta(v'-v_1)
    \; d\sigma dv_* dv.  \label{Q+fM_2}
\end{align} By using the fact that $|v-v_*| \leq |v'-v_*|+|v-v'| = 
|v'-v_*|+|v_*-v_*'|, \;|v_*'|M(v_*')\lesssim 1$, and $|v_*|e^{-a|v_*|^2} \lesssim 1$, we have
\begin{align}\label{Q+fM_3}
\begin{split}
     \eqref{Q+fM_2}
    &\lesssim \frac{2^{\epsilon_2}}{4}\left(\frac{1}{\beta}\right)^{\epsilon_1}
    \left( \frac{1}{1-\beta}\right)^{\epsilon_2}\int_{\mathbb{R}^3} f(t,v_*)e^{2a|v_*|^2} \\
    &\quad \times \int_{ \mathbb{S}^2}  \frac{\sin^{\epsilon_2}\theta}{\cos^{\epsilon_1}\theta} 
 \int_{\mathbb{R}^3}   (1+|v'|)^{1-\epsilon_1} M(v')\delta(v'-v_1)
    \; dv d\sigma dv_*.   
\end{split}
\end{align} From \eqref{v'_sigma}, we can calculate the Jacobian determinant for the map between $v$ and $v'$ for fixed $\sigma$ as 
\begin{align} \label{jaco_v'v}
\begin{split}
    \left| \frac{dv'}{dv} \right| &= \left(1- \frac{\beta}{2}\right)^3  \left(1 + \frac{\beta}{2-\beta} \frac{v-v_*}{|v-v_*| } \cdot \sigma \right) \\
     &=\left(1- \frac{\beta}{2}\right)^3 \left(1- \frac{\beta}{2-\beta} + \frac{2\beta}{2-\beta}\sin^2 \theta \right)>0
\end{split}
\end{align} since \eqref{theta-2theta}. Applying \eqref{jaco_v'v} to \eqref{Q+fM_3}, and $\frac{1}{2} < \beta < 1$, we have
\begin{align}
     &\text{(RHS) of\;} \eqref{Q+fM_3}\\
     &\lesssim \frac{2^{\epsilon_2}}{4}\left(\frac{1}{\beta}\right)^{\epsilon_1}
    \left( \frac{1}{1-\beta}\right)^{\epsilon_2}\left(1- \frac{\beta}{2}\right)^{-3} 
    \int_{\mathbb{R}^3} f(t,v_*)e^{2a|v_*|^2} dv_* \notag\\
    &\quad \times \int_{\mathbb{R}^3}   (1+|v'|)^{1-\epsilon_1} M(v')\delta(v'-v_1) dv' \notag \\
    &\quad \times \int_0^{\frac{\pi}{2}} \sin 2\theta 
    \frac{\sin^{\epsilon_2}\theta}{\cos^{\epsilon_1}\theta} \left(  \frac{2\beta}{2-\beta}\sin^2 \theta    \right)^{-1} d\theta \notag \\
    &\lesssim   \left( \frac{1}{1-\beta}\right)^{\epsilon_2}M(v_1) (1+|v_1|)^{1-\epsilon_1}\int_{\mathbb{R}^3} f(t,v_*)e^{2a|v_*|^2} dv_* \notag\\
    &\quad \times \left(  \int_0^{\frac{\pi}{4}} \sin^{\epsilon_2-1} \theta \;d\theta +
     \int_{\frac{\pi}{4}}^{\frac{\pi}{2}} \cos^{1-\epsilon_1}\theta \;d\theta \right)
     \label{Q+fM_4}
\end{align} for $0<\epsilon_1, \epsilon_2<1$. Taking $\epsilon_2 = 1-\beta $ and $0<\epsilon_1<1$ in \eqref{Q+fM_4} and using \eqref{C}, we have
\begin{align} \label{esti_Q_A}
     Q_{A}^{+}(f,M)(t,v_1) \lesssim M(v_1)(1+|v_1|)^{1-\epsilon_1}
\end{align} since $(1-\beta)^{-(1-\beta)}\lesssim 1$.

Next, we estimate $Q_{A^{c}}^{+}(f,M)(t,v_1)$ for $\frac{1}{2} <\beta < 1$. By
\eqref{cos_beta}, we have 
\begin{align}
       &Q_{A^{c}}^{+}(f,M)(t,v_1) \notag \\
        &\leq  \frac{1}{4}\left(\frac{1}{\beta}\right)^{\epsilon_1}\int_{\mathbb{R}^3}\int_{\mathbb{R}^3}\int_{ \mathbb{S}^2}
     |v-v_*|^{1-\epsilon_1}|v_*-v_*'|^{\epsilon_1}
      \frac{1}{\cos^{\epsilon_1}\theta}  \mathbf{1}_{A^{c}} \notag\\
     &\quad\times \frac{f(t,v_*)}{M(v_*)}M(v_*')M(v')\delta(v'-v_1)
    \; d\sigma dv_* dv \label{Q+fm_1}
\end{align} for $0<\epsilon_1<1$. By using the fact that $|v-v_*| \leq |v'-v_*|+|v_*-v_*'|, \;|v_*'|M(v_*')\lesssim 1$, and $|v_*|e^{-a|v_*|^2} \lesssim 1$, we have
\begin{align} \label{Q+fm_2}
\begin{split}
     \text{(RHS) of\;} \eqref{Q+fm_1} &\lesssim  \frac{1}{4}\left(\frac{1}{\beta}\right)^{\epsilon_1}\int_{\mathbb{R}^3} f(t,v_*)e^{2a|v_*|^2} \\
    &\quad \times \int_{ \mathbb{S}^2} \frac{1}{\cos^{\epsilon_1}\theta}
     \int_{\mathbb{R}^3}   (1+|v'|)^{1-\epsilon_1} M(v')\delta(v'-v_1)  \mathbf{1}_{A^{c}} 
    \; dv d\sigma dv_*.   
\end{split}
\end{align} From \eqref{def_Ac}, we have
\begin{align*} 
\begin{split}
    \frac{1}{\beta}-1 &\geq \frac{1}{2} \frac{|v'-v_*|}{|v_*-v'_*|} 
    \geq \frac{1}{2}\left( \frac{|v-v_*|}{|v_*-v_*'|}-\frac{|v-v'|}{|v_*-v_*'|} \right)\\
    &=\frac{1}{2}\left( \frac{1}{\beta\cos\theta}  -1 \right)>0
\end{split} 
\end{align*} for $v,v_*,\sigma \notin A$,  and it yields that
\begin{align}\label{1/beta-1}
    \cos \theta \geq \frac{1}{2-\beta}.
\end{align}
From \eqref{jaco_v'v}, and obtain
\begin{align} \label{dv'/dv_lower}
\begin{split}
     \left| \frac{dv'}{dv} \right| 
     &\geq \left(1- \frac{\beta}{2}\right)^3 \left(1- \frac{\beta}{2-\beta} \right) = 2  \left(1- \frac{\beta}{2}\right)^3\frac{1-\beta}{2-\beta}>0.
\end{split} 
\end{align} Applying \eqref{dv'/dv_lower} to \eqref{Q+fm_2}, and using \eqref{1/beta-1}, we have
\begin{align}\label{Q+fm_3}
\begin{split}
      \text{(RHS) of\;} \eqref{Q+fm_2} &\lesssim \frac{1}{4}\left(\frac{1}{\beta}\right)^{\epsilon_1}\frac{2-\beta}{1-\beta}\left(1- \frac{\beta}{2}\right)^{-3}    \int_{\mathbb{R}^3} f(t,v_*)e^{2a|v_*|^2} dv_* \\
        &\quad \times \int_{\mathbb{R}^3}   (1+|v'|)^{1-\epsilon_1} M(v')\delta(v'-v_1) dv' \\
        &\quad \times 
        \int_0^{\frac{\pi}{2}} \sin 2\theta \frac{1}{\cos^{\epsilon_1}\theta} \mathbf{1}_{\left\{ \cos\theta \geq \frac{1}{2-\beta} \right\}} d \theta. 
\end{split}
\end{align}
Because
\begin{align*}
        \int_0^{\frac{\pi}{2}} \sin 2\theta \frac{1}{\cos^{\epsilon_1}\theta} \mathbf{1}_{\left\{ \cos\theta \geq \frac{1}{2-\beta} \right\}} d \theta
        &\leq 2\int_0^{\frac{\pi}{2}} \sin\theta \cos^{1-\epsilon_1}\theta \mathbf{1}_{\left\{ \cos\theta \geq \frac{1}{2-\beta} \right\}} d \theta \\
        &\leq 2\int_{\frac{1}{2-\beta}}^1  t^{1-\epsilon_1} dt \leq 2\frac{1-\beta}{2-\beta}
\end{align*} for $0<\epsilon_1<1$, from \eqref{Q+fm_3}, we have
\begin{align} \label{esti_Q_Ac}
       Q_{A^{c}}^{+}(f,M)(t,v_1) \lesssim M(v_1)(1+|v_1|)^{1-\epsilon_1}
\end{align} since $\frac{1}{2} < \beta < 1$. Combining \eqref{esti_Q_A} with \eqref{esti_Q_Ac}, there exists some constant $C_1>0$ such that
\begin{align*}
    Q^{+}(f,M)(f,M) &=  Q_{A}^{+}(f,M)(t,v_1)+ Q_{A^{c}}^{+}(f,M)(t,v_1)\\
    &\leq C_1 M(v_1)(1+|v_1|)^{1-\epsilon_1}
\end{align*} for $0< \epsilon_1 < 1$. From Lemma \ref{Lf>|v|}, we have 
   \begin{align*}
       Q^{-}(f,M) (t,v_1) = M(v_1) Lf(t,v_1) \geq \pi M(v_1)|v_1|.
   \end{align*} Therefore we have
   \begin{align*} 
        Q(f,M)(t,v_1) \leq M(v_1) \left(C_1(1+|v_1|)^{1-\epsilon_1} -\pi|v_1| \right).
   \end{align*} Then we choose $R>0$ sufficiently large such that $Q(f,M) \leq 0$, and it concludes the proof for $0<\beta<1$. In Lemma 5 of \cite{GPV2009}, the authors proved an analogue of this Lemma in the case of elastic collisions, when $\beta =1$.
\end{proof}

Now we are ready to prove the Maxwellian upper bound of the solution. We apply Lemma \ref{Q(M)<0} to \eqref{comparison_condi_1} and use the inequality 
\begin{align} \label{apply_to_maxwellian}
    f(t,v) \leq C_1(\|f_0\|_{\infty,s})\min{\left\{(1+t)^3, e^{C_2\left(\frac{1}{\alpha^2}-1\right)t}\right\}}
\end{align} for some positive constants $C_1(\|f_0\|_{\infty,s}), \;C_2$, from Theorem \ref{L_infty_thm} to \eqref{comparison_condi_2}. Additionally, \eqref{comparison_condi_0} arises from the assumption of the initial bound.

\begin{proof}[Proof of Theorem \ref{thm_upper}]
There exists a positive constant $K_1$, which depends on $c_0$ such that
\begin{align} \label{C_1_thm}
    \int_{\mathbb{R}^3} f_0(v)e^{\frac{a_0}{2}|v|^2} dv
    \leq K_1.
\end{align} since $f_0(v) \leq M_0(v)$.
By Proposition 3.2-(ii) in \cite{SCM2006}, and $T_c = +\infty$ in Corollary \ref{Tc_Cor} or Remark \ref{remark_Tc}, and \eqref{C_1_thm}, there exist $K_2, a_1>0$, which depend on $a_0$ and $K_1$ such that
    \begin{align} \label{C_2_thm}
        \sup_{t\in[0,\infty)} \int_{\mathbb{R}^3} f(t,v) e^{a_1|v|^{2}} dv \leq K_2.
    \end{align} We take $a=\min\{a_0, \frac{a_1}{2}\}$ and $b \geq c_0$ and set
\begin{align*}
    u(t,v)= f(t,v_1)-\overline{M}(t,v_1),
    \text{\; where \;} \overline{M}(t,v_1)=(1+t)^3 e^{-a|v_1|^2+b}.
\end{align*} Then we have
  \begin{align*}
       u(0,v) = f(0,v_1)-\overline{M}(0,v_1) \le  f_0(v_1)-M_0(v_1) \leq 0,
   \end{align*} since $a<a_0$ and $c_0 \leq b$. By \eqref{C_2_thm} and Lemma \ref{Q(M)<0}, there exists a large $R>0$, which depends on $a_1, K_2$ such that
   \begin{align*}
       \partial_t u - Q(f,u) (v_1) 
       = -\partial_t\overline{M} + Q(f,\overline{M}) (t,v_1)
       \leq 0
   \end{align*} for $|v_1| \geq R$. From \eqref{apply_to_maxwellian}, we have $f(t,v_1) \leq C_1 (t+1)^3$, for some constant $C_1$ depending on $c_0$. Taking $b=aR^2+ \log C_1 (\geq c_0)$, it holds that
   \begin{align*}
      \overline{M}(t,v_1)= C_1 (1+t)^3 e^{-a|v_1|^2+aR^2} \geq f(t,v_1)
   \end{align*} for $|v_1|<R$. Therefore we apply Lemma \ref{comparison} and obtain 
   \begin{align} \label{upper_max_poly}
       f(t,v_1) \leq \overline{M}(t,v_1)=(1+t)^3 e^{-a|v_1|^2+b}.
    \end{align} 
    Moreover, we replace
    \begin{align*}
         \overline{M}(t,v_1)=(1+t)^3 e^{-a|v_1|^2+b}
    \end{align*} with 
    \begin{align*}
          \widehat{M}(t,v_1)=e^{C_2\left(\frac{1}{\alpha^2}-1\right)t} e^{-a|v_1|^2+b},
    \end{align*} where the positive constant $C_2$ is used in \eqref{apply_to_maxwellian}, and depends on $c_0$. Then, by following the same argument as above, we obtain
       \begin{align} \label{upper_max_exp}
       f(t,v_1) \leq     \widehat{M}(t,v_1)=e^{C_2\left(\frac{1}{\alpha^2}-1\right)t} e^{-a|v_1|^2+b}
    \end{align} instead of \eqref{upper_max_poly}. From \eqref{upper_max_poly} and \eqref{upper_max_exp}, we conclude
    \begin{align*}
          f(t,v_1) \leq \min{\left\{(1+t)^3, e^{C_2\left(\frac{1}{\alpha^2}-1\right)t} \right\}} e^{-a|v_1|^2+b}
    \end{align*} for almost every $v_1 \in \mathbb{R}^3$ and every $t > 0$.
\end{proof}

\section*{Acknowledgment}
The first and the third authors are supported by the National Research Foundation of Korea(NRF) grants RS-2023-00212304 and RS-2023-00219980. The second author is supported by the National Research Foundation of Korea (NRF) grants RS-2023-00210484, RS-2023-00219980, 2022R1G1A1009044 and 2021R1A6A1A100\\42944, and also by Posco Holdings Inc. and Samsung Electronics Co., Ltd. (IO23041\\2-05903-01). \\ 

\noindent \textbf{Data Availability} Not applicable\\

\noindent \textbf{Conflict of interest} The authors declare that they have no conflict of interest.

\bibliographystyle{amsplain3links}
\nocite{*}
\bibliography{reference.bib}
\end{document}